\newif\ifbiometrika
\newtheorem{theorem}{Theorem}
\newtheorem{lemma}{Lemma}
\newtheorem{proposition}{Proposition}
\newtheorem{corollary}{Corollary}
\newtheorem{definition}{Definition}
\newtheorem{assumption}{Assumption}
\newcommand{\Input}{\item[\textbf{Input:}]}
\newcommand{\Output}{\item[\textbf{Output:}]}
\newcommand{\conv}{\operatorname{conv}}
\newcommand{\cone}{\operatorname{cone}}
\newcommand{\calH}{\mathcal{H}}
\newcommand{\calN}{\mathcal{N}}
\newcommand{\calU}{\mathcal{U}}
\newcommand{\calV}{\mathcal{V}}
\newcommand{\R}{\mathbb{R}}
\newcommand{\papertitle}{Identification and consistent estimation in source apportionment using geometry}
\newenvironment{keywords}{%
  \par\small
  \noindent\textbf{Keywords}: \ignorespaces
}{\par}
\title{\textbf{\papertitle}}
\author{Bora Jin, Abhirup Datta\\
Department of Biostatistics, Johns Hopkins University}
\date{}
\begin{document}
\maketitle

% abstract
\begin{abstract}
Source apportionment, the attribution of observed multipollutant concentrations to underlying sources, can be cast as a non-negative matrix factorization (NMF) problem. Because NMF is non-unique, source apportionment imposes additional, often unverifiable, constraints such as sparsity. Geometric approaches offer an alternative route to identification, but many of them still rely on source profiles with arbitrary scalings, make strong structural assumptions including exact separability, and lack a statistical framework for consistent estimation. In this manuscript, we address these limitations. We introduce the source attribution matrix that is scale-invariant and establish its identifiability under a stochastic framework that replaces hard separability constraints with soft probabilistic relaxations. We then present a scalable geometric algorithm to estimate the source attribution matrix and prove its consistency. To our knowledge, this is the first consistency result for estimating the source attribution matrix that requires no exact sparsity, makes no parametric distributional assumptions, and accommodates spatio-temporal dependence in data. Numerical experiments confirm the theory.
%The method is then applied to understand sources of air pollution in an Arctic study. 
\end{abstract}

\begin{keywords}
    Asymptotic theory; Convex geometry; Non-negative matrix factorization; Source apportionment; Statistical identifiability. 
\end{keywords}

% body
\section{Introduction}

Environmental pollutants in an area are typically produced by multiple sources, including industry, traffic, residential heating, construction, and natural events. An important scientific and policy question then arises about which sources drive each pollutant. Source apportionment answers this question by quantifying the attribution of observed pollutant concentrations to underlying sources. 

Multivariate receptor models are a common tool for source apportionment analysis that decompose multipollutant data into source profiles and source intensities. The Chemical Mass Balance (CMB) model is a foundational receptor-based approach, utilizing the known source profiles to quantify intensities at specific monitoring sites \citep{watson_chemical_1991}. When source profiles are unknown, Positive Matrix Factorization (PMF) \citep{paatero_positive_1994} or Non-negative Matrix Factorization (NMF) are widely used to estimate both source intensities and profiles. Other notable approaches include geometric methods  \citep{henry_history_1997}, Bayesian methods \citep{park_multivariate_2001, hackstadt_bayesian_2014}, and models with temporal \citep{christensen_accounting_2002, heaton_incorporating_2010} or spatial correlation \citep{jun_multivariate_2013}. %In specialized univariate applications with additional information, source intensities may be fixed at predictions from an atmospheric transport model and inference focuses on estimating source emission rate \citep{daniels_bayesian_2025}. -- BJ: univariate (Methane), less relevant
Section \ref{sec:rev} provides a more detailed review of source apportionment theory and methods relevant for this work. 

Using multipollutant measurements only, receptor-based approaches are naturally formulated as an NMF problem. Let $Y\in \R^{n \times J}$ be the observed concentration matrix, where $n$ is the number of samples and $J$ is the number of pollutants. In source apportionment, with $K$ sources, $Y$ is represented as $WH$ with $W \in \R^{n\times K}$ as source intensities and $H\in \R^{K\times J}$ as source profiles. Because $Y$, $W$, and $H$ correspond to concentrations, intensities, and compositions, respectively, all three are physically constrained to be non-negative. This constraint distinguishes the problem from a standard factor analysis. 

NMF is generally non-identifiable, as many different pairs of $(W, H)$ can lead to the same product $Y$. Thus, simple least-squares-based methods cannot uniquely identify the two factors. Identifiability is often enforced by additional constraints such as structured zero patterns \citep{park_bilinear_1999, sug_park_bilinear_2002} in the source profile matrix $H$. However, misspecified sparsity patterns can bias results, and fitted factors are highly sensitive to such choices \citep{viana_source_2008}. Furthermore, existing consistency theory for least-squares-based NMF \citep{yang_confirmatory_1994, park_bilinear_1999} is limited to global solutions, which are not guaranteed by the non-convex nature of the NMF optimization.

Geometric perspectives have been particularly influential in NMF, as they provide a visual and structural framework for identifiability. Due to non-negativity, the samples $\{Y_i^\top=W_i^\top H\}_{i=1}^n$ can be interpreted as a point cloud within a simplicial cone, where the rows of $H$ define the extreme rays (`corners') of the data geometry. \cite{donoho_when_2003} provides geometric conditions for identification in NMF. A sufficient condition is separability, where each source is represented by at least one `pure' observation in the sample that comes from a single source. The source profiles can then be recovered by identifying extreme points of the data cloud. NMF algorithms relying on geometry include Vertex Component Analysis \citep[VCA,][]{nascimento_vertex_2005}, XRAY \citep{kumar_fast_2013}, and N-FINDR \citep{winter_n-findr_1999}. Similar geometric reasoning has been used in some earlier source apportionment work \citep[Unmix,][]{henry_history_1997}.

Geometric identification in NMF typically assumes deterministic conditions on the factor matrices. Separability \citep{winter_n-findr_1999, donoho_when_2003, nascimento_vertex_2005} requires exact sparsity patterns in the data, while the sufficiently scattered condition \citep{fu_identifiability_2018} relaxes this by requiring source intensities to span the non-negative orthant. Such conditions are unlikely to hold exactly \citep{ma_signal_2014} and lack physical justification in environmental applications. 

Most geometry-driven NMF identifiability studies also lack a stochastic data-generating model and therefore provide no avenue for studying asymptotic properties of the estimators or for accommodating spatio-temporal dependence prevailing in environmental data. In fact, asymptotic theory is universally lacking: while probabilistic source apportionment methods \citep{park_multivariate_2001, jun_multivariate_2013, frigeri_bayesian_2025} provide a stochastic data-generating model and account for spatial or temporal correlation, to our knowledge, no relevant asymptotic theory has been developed. 

Geometric identifiability for NMF, like any other identifiable NMF approaches, only ensures uniqueness up to scaling of the factor matrices, leaving ambiguity in interpretation. Any factorization $Y = WH$ admits equivalent representations of the form $(WD)(D^{-1}H)$ for any positive diagonal matrix $D$. No algorithm can distinguish between $(W,H)$ and $(WD,D^{-1}H)$. Row-normalization of source profiles is a common convention to resolve this, but is problematic when pollutants are measured in incompatible units or when dominant pollutants are not measured.

In this manuscript, we address these gaps in the identification and estimation of source apportionment. We introduce a stochastic data-generating framework that replaces deterministic geometric assumptions with probabilistic relaxations. We focus on population-level inferential targets invariant to scaling of factor matrices. Most notably, we propose the source attribution matrix $\Phi$, whose $(k,j)$th entry is the fraction of the expected concentration of pollutant $j$ attributable to source $k$. This estimand directly answers the policy-relevant question of which sources drive each pollutant and remains meaningful when pollutants are measured in different units or are not all measured. 

We establish identifiability and consistent estimation of $\Phi$ under weaker assumptions better suited to environmental concentration data. Our framework is built on a probabilistic relaxation of separability, not requiring exact sparsity or fixed scaling of the factor matrices. The framework assumes stationarity and ergodicity of the source intensity processes without parametric distributional assumptions, allowing for spatio-temporal dependence typical of environmental concentration data. We further show that $\Phi$ is unchanged under separable multiplicative measurement error, so the resulting source attributions are robust to sample-specific errors and pollutant-specific instrumental scalings. Using the geometry of the conical hull and ideas from polytope estimation, we give an explicit algorithm that consistently estimates $\Phi$.

The remainder of the paper is organized as follows. Section \ref{sec:rev} reviews related source apportionment methods and highlights our contributions. Section \ref{sec:identification} defines the source attribution matrix and discusses its properties. Section \ref{sec:est} develops the estimation framework and establishes consistency of the resulting estimators. Numerical studies are provided in Section \ref{sec:sim}. Section \ref{sec:disc} concludes with a discussion. 

% Relative to prior work on geometric NMF \citep{henry_history_1997, donoho_when_2003, fu_identifiability_2018}, our contribution is to turn deterministic geometric identifiability into a statistical theory for source apportionment. We replace exact boundary or pure sample conditions with probabilistic conditions, shift the inferential target from non-unique factor matrices to the scale-invariant interpretable quantities like the source attribution matrix $\Phi$, and provide a scalable algorithm with asymptotic guarantees under spatio-temporal dependence. 

\section{Relevant source apportionment methods} \label{sec:rev}

\subsection{Premise of source apportionment}

Let $Y_{ij}$ denote the concentration of the $j$th pollutant in the $i$th data record, where $i$ corresponds to a time point or a space-and-time combination. With $K$ sources, we model
\begin{equation}\label{eq:main}
    Y_{ij} = \sum_{k=1}^K W_{ik} H_{kj}
\end{equation} 
where $H_{kj} \geq 0$ denotes the concentration of pollutant $j$ per unit of emissions from source $k$, and $W_{ik} \geq 0$ denotes the source intensities (can be thought of as emission volume) from that source in record $i$. We leave `one unit' vague, as we will show that our estimands of interest are invariant to the unit choice. Let \( Y_i = (Y_{i1},\ldots,Y_{iJ})^\top \in \R_+^J \), \( W_i = (W_{i1},\ldots,W_{iK})^\top \in \R_+^K \), and \( H = (H_{kj}) \in \R_+^{K \times J}\). Then $Y_i^\top = W_i^\top H$, and stacking the $n$ records gives the non-negative matrix factorization $Y=WH$. 

Models of the form \eqref{eq:main}, often with an additional error term, are termed multivariate receptor models and are widely used in source apportionment analysis. See \cite{krall_statistical_2019} for a comprehensive review of source apportionment methods, though with limited coverage of geometric developments because they have been largely underexplored in the source apportionment literature. In this section, we review major relevant methods, identify key gaps, and highlight our contributions. 

\subsection{Algebraic approaches}

Many methods for source apportionment minimize ordinary or weighted least squares $\|Y - WH\|^2$. These include the CMB model, which requires known source profiles $H$ and estimates $W$ via precision-weighted least squares. Standard NMF implementations in R \citep{gaujoux_flexible_2010} and Python \citep{pedregosa_scikit-learn_2011} estimate both $W$ and $H$, possibly with regularization to promote sparse solutions. PMF \citep{paatero_positive_1994}, the most widely used source apportionment algorithm, also minimizes precision-weighted least squares to estimate both $W$ and $H$. 

%\subsection{Identifiability and asymptotics under structural assumptions}\label{sec:idrev}

Identifiability of least-squares-based NMF is a well-studied problem in source apportionment. \cite{park_bilinear_1999, sug_park_bilinear_2002} provide multiple sets of sufficient conditions for identifiability of $W$ and $H$ in \eqref{eq:main}. Many of these assumptions impose exact sparsity on the profile matrix $H$. For example, Assumption set $C$ of \cite{sug_park_bilinear_2002} assumes each row of $H$ has at least $K-1$ zeroes, implying that each source does not emit at least $K-1$ of the $J$ pollutants. Assumption set $D$ of \cite{sug_park_bilinear_2002} requires a $K \times K$ diagonal submatrix in $H$, meaning each source has at least one unique tracer pollutant in data. These assumptions are rarely valid in practice unless chemically speciated measurements are available with clear tracer species for each source.

Asymptotic theory for least-squares-based NMF is largely underdeveloped. Standard nonlinear least squares theory does not apply directly, as the number of parameters grows with the sample size, and existing results require strong additional assumptions such as identically and independently distributed (iid) rows of $W$ \citep{yang_confirmatory_1994}. Furthermore, established asymptotic results are limited to global optima \citep{park_bilinear_1999, sug_park_bilinear_2002}, which standard NMF algorithms are not guaranteed to find.

\subsection{Probabilistic approaches}

Probabilistic approaches that assume a stochastic data-generating model are also common in source apportionment. Early work by \citet{bandeen-roche_source_1991} introduced random iid source intensities and established consistency and asymptotic distributional results for an unknown source profile estimator, though in a specialized setup restricted to a single unknown source. More recent approaches offer a natural framework for modeling  temporal \citep{park_multivariate_2001, christensen_accounting_2002, heaton_incorporating_2010, rusanen_novel_2023}, spatial \citep{jun_multivariate_2013}, or spatio-temporal \citep{frigeri_bayesian_2025} dependence in environmental data. Estimation proceeds via Bayesian MCMC algorithms \citep{park_multivariate_2001, heaton_incorporating_2010, jun_multivariate_2013, hackstadt_bayesian_2014, rusanen_novel_2023, frigeri_bayesian_2025}, or maximum likelihood estimation with bootstrapping \citep{christensen_accounting_2002}. Probabilistic approaches often rely on strong parametric assumptions such as Gaussian processes with specific covariance classes for spatial dependence, or auto-regressive processes for temporal dependence. While identifiability is achieved through the parametric form and prior choices, to our knowledge, there is little asymptotic theory studying robustness of inference on the NMF factors under misspecification of these parametric assumptions.

\subsection{Geometric approaches} \label{sec:georev}

If $Y_i$ is generated according to \eqref{eq:main}, each observation satisfies $Y_i = \sum_{k=1}^K W_{ik}h_k$, where $h^{\top}_k$ is the $k$th row of $H$. Geometrically, since $W_{ik}$'s are non-negative, $Y_i$ lies in the cone generated by the rows of $H$ as the corner lines (`extremal rays'), and so does the entire point cloud $\{Y_i\}_{i=1}^n$. Equivalently, the row-normalized data $Y^*_i$ lies in the polytope (simplex) with vertices at the row-normalized rows of $H$. The extremal rays of the cone (or equivalently, the vertices of the simplex), and hence $H$ up to scaling, can be identified from the shape of the point cloud, provided that the $\{Y_i\}_{i=1}^n$ has sufficient geometric coverage of the cone. 

\cite{henry_history_1997}, which is one of the pioneering works drawing the connection between geometry and source apportionment, formalizes the geometric identifiability condition as requiring at least $K-1$ normalized observations $Y_i^*$ lying on each face of the polytope. %for each source there must be at least $K-1$ edge points, where an edge point is an observation with one source absent.
\cite{henry_multivariate_2003} presents the Unmix multivariate receptor model based on this identification idea; however, its implementation \citep{us_epa_epa_2015} is not compatible with modern operating systems. \cite{donoho_when_2003} establishes identifiability conditions in terms of the conical geometry, requiring $K-1$ linearly independent data points on each face of the cone, and shows separability together with complete factorial sampling specific in image setting is sufficient to guarantee it. Separability is a classical but strong condition: for each source, there exist pure observations, i.e., times when only that source is active. Geometrically, it requires some $Y_i$'s to lie exactly on the extremal rays of the cone, or equivalently, some $Y_i^*$'s to lie at the vertices of the polytope. End-member extraction algorithms, originally developed in remote sensing literature but later also applied to NMF, exploit separability to find pure observations. These include VCA \citep{nascimento_vertex_2005}, which uses random projections and successive identification of extremes in the cone. XRAY \citep{kumar_fast_2013} similarly exploits the cone geometry, selecting anchor points by finding directions of largest residual. N-FINDR \citep{winter_n-findr_1999} searches for a maximum-volume simplex, iteratively replacing vertices. Other conditions relaxing separability include near-separability \citep{gillis_robust_2014, kumar_near-separable_2015}, which requires approximately pure observations, and the sufficiently scattered condition \citep{fu_identifiability_2018}, which requires data to be sufficiently spread inside the cone. %\cite{fu_identifiability_2018} relaxes separability, requires that data are sufficiently spread inside the cone, and develops a penalized approach that estimates the cone by minimizing the volume of the enclosing polytope of $\{Y_i^*\}_{i=1}^n$. 
After estimating $H$, the corresponding weights $W$ can be estimated by solving a non-negative least squares problem \citep{kumar_fast_2013}, among other approaches. 

%Generic methods for polytope estimation, which are well studied \citep{brunel_adaptive_2013, brunel_uniform_2019, brunel_estimation_2021}, can also be used to estimate the scaled $H$ in an NMF. -- BJ: don't see the direct connection

Many geometric NMF identification studies heavily depend on separability, imposing exact or approximate sparsity on source intensities. Such deterministic sparsity may not be realized in a finite sample and is unrealistic for environmental applications: it is unlikely that all other sources are simultaneously inactive with only one source active. Furthermore, geometric identifiability results \citep{gillis_fast_2014, arora_computing_2016} address under what conditions a unique factorization can be recovered efficiently, but do not address whether a sample-based estimator converges to the true estimand as the sample size grows. They also tend to ignore the spatio-temporal dependence that environmental data actually exhibit.

\subsection{Our contributions}

No identification condition in NMF fully resolves the scaling indeterminacy: $W$ and $H$ are determined only up to arbitrary scaling. While a common remedy is to impose an additional sum-to-one constraint on the rows of $H$, the normalized $H$ is sensitive to pollutant-specific unit changes. If pollutants are measured in incompatible units, as is common in air sensor networks where size-resolved particulate matter is measured in $\mu g m^{-3}$ and gases in ppb \citep{jin_source_2025}, the row sum is not physically interpretable. Even when units are compatible, normalized profiles can be misleading if a pollutant that dominates a source is not included in the measurement panel, as the remaining pollutants absorb its weight and distort the profile.
%Suppose the true profile for a source is $1000 \mu gm^{-3}$ of $PM_{2.5}$ (fine PM) and $100 \mu gm^{-3}$ of Black Carbon (BC). After normalization, that row of $H$ becomes $(\frac {10}{11}, \frac 1{11})$. If PM$_{2.5}$ was reported in $mg m^{-3}$, the same profile becomes $(\frac 1{101}, \frac {100}{101})$. Thus, a unit change can completely flip the dominance of a pollutant in a source profile, making normalized profiles uninterpretable. -- BJ: can be criticized easily that BC should also change the unit accordingly. Illustrating it with PM and CO is less convincing; no one would do that. 

In this manuscript, we synthesize probabilistic and geometric approaches that have rarely overlapped to develop a comprehensive theory of identifiability and consistent estimation for source apportionment. We replace deterministic geometric identifiability conditions with probabilistic counterparts and accommodate spatio-temporal dependence. We shift the inferential target from non-unique factor matrices to the scale-invariant %quantities like the - BJ: we only suggest Phi
source attribution matrix $\Phi$, which is statistically identified and consistently estimated without parametric assumptions under a stochastic data-generating mechanism. We also provide an algorithm guaranteed to find the global solution given sufficient computation, along with scalable variants suited to modern large, high-dimensional data.

%The two strands of research, model-based source apportionment and geometry-driven NMF, have rarely overlapped. Model-based approaches have generally not studied large-sample properties of their estimators and have often relied on strong parametric assumptions, while geometry-driven advances in NMF have largely been divorced from a sampling framework, thereby relying on exact geometric constraints and not providing a statistical paradigm for source apportionment. In this manuscript, we synthesize these complementary perspectives to develop a comprehensive theory of identifiability and consistent estimation for source apportionment. Under a stochastic framework for the data, suitable probabilistic relaxations of hard geometric constraints, we characterize which quantities can be statistically identified and consistently estimated without relying on parametric assumptions on the data-generating mechanism.

\section{Geometric identification in probabilistic source apportionment}\label{sec:identification}

\subsection{Source attribution matrix}

In source apportionment, we advocate focusing on quantities invariant under the scaling indeterminacy. An important quantity that satisfies this scale invariance is the proportion of each pollutant attributable to each source. We now define this quantity and subsequently show that it can be consistently estimated under mild assumptions using geometric methods. Since $\sum_{i=1}^nW_{ik}H_{kj}$ is the total concentration of pollutant $j$ from source $k$ in the data, the \textit{proportion} of total concentration of pollutant $j$ \textit{attributable to source $k$} 
is 
\begin{equation}\label{eq:sample_phi}
    \frac{\sum_{i=1}^n W_{ik}H_{kj}}{\sum_{\ell = 1}^K\sum_{i=1}^n W_{i\ell}H_{\ell j}},
\end{equation}
which corresponds to the `percent of species' output produced by the PMF software \citep{us_epa_epa_2015}, although we have not found this exact formula explicitly documented. We introduce a population-level analog of this proportion in \eqref{eq:sample_phi}. Under a stochastic framework for the intensities $W$ and mild conditions formally defined later, $\frac 1n \sum_{i=1}^n W_{ik} \to \mu_k$ where $\mu_k=\mathbb{E}(W_{ik})$ is the expected intensity from the $k$th source. So, the proportion in \eqref{eq:sample_phi} is the sample version of the population-level quantity 
\begin{equation}\label{eq:contrib}
\phi_{kj}=\frac {\mu_k H_{kj}}{\sum_{\ell=1}^K \mu_{\ell} H_{\ell j}}.
\end{equation}
The $K \times J$ column-stochastic matrix $\Phi=(\phi_{kj})$ is our estimand, representing the proportion of the total concentration of pollutant $j$ attributed to source $k$. We call $\Phi$ the source attribution matrix.
 
Unlike a row-normalized $H$, $\Phi$ is normalized within pollutants rather than within sources. It therefore does not purport to describe a source's full emissions profile; its entries retain the interpretation of source-specific contributions to each measured pollutant even when a dominant pollutant emitted by the source is unmeasured. Furthermore, $\Phi$ is invariant to pollutant-specific scale or unit changes. From \eqref{eq:contrib}, scaling the $j$th column of $Y$ by $\sigma_j>0$ gives $\phi_{kj}^{\text{(scaled)}} = \mu_k H_{kj} \sigma_j/(\sum_{\ell=1}^K \mu_{\ell} H_{\ell j} \sigma_j) = \phi_{kj}$. If $Y=WH$ is rewritten as $Y=\widetilde W \widetilde H$ where $\widetilde W = WD$ and $\widetilde H =D^{-1}H$ for some diagonal matrix $D$ with positive entries, then $\Phi$ computed from $(W,H)$ is same as that from $(\widetilde W, \widetilde H)$. This scale invariance is expected and desirable: unit changes do not affect source attributions. 

\subsection{Statistical identifiability} 

Beyond scale invariance, $\Phi$ is statistically identifiable. Our argument uses the conical geometry of the data cloud. We begin by introducing the main geometric concepts.

\begin{definition}[Conical hull and extremal rays] Let $\calV=\{v_1,\ldots,v_M\} \subseteq \mathbb R^d_+$. Then $\cone(\calV)=\{\sum_{m=1}^M \alpha_m v_m : \alpha_m \geq 0, v_m \in \calV\}$ is its conical hull. A set $\calU=\{u_1,\ldots,u_R\} \subseteq \cone(\calV)$ is said to be a set of representatives of extremal rays of $\cone(\calV)$ if (i) $\cone(\calU)=\cone(\calV)$; (ii) $u_i \in \calU$ implies that $cu_i \notin \calU$ for any $c \neq 1$ and $c>0$; and (iii) no $u_i$ can be written as a non-negative linear combination of $\{ u_j : j \neq i\}$.
\end{definition}

The extremal rays are the corner lines (the one-dimensional faces) of the polyhedral cone $\cone(\calV)$. None of these rays can be expressed as a non-negative linear combination of the others, and thus they form the minimal generating set whose conical hull is $\cone(\calV)$. 

Set $\calH=\{h_1,\dots,h_K\}$ where $h_k^\top$ is the $k$th row vector of $H$. Without loss of generality, we can assume that none of the $h_{k}$'s are non-negative linear combinations of the other $h_{k'}$ (equivalently, $H$ has full row rank); otherwise, we can remove that source. If $Y_{i}$ is generated according to \eqref{eq:main}, each vector $Y_i = \sum_{k=1}^K W_{ik}h_k$ lies within $\cone(\calH)$. Figure \ref{fig:cloud} (left) illustrates the point cloud (grey vectors) lying in the cone of three extremal rays (red vectors). 

\begin{figure}
    \centering
    \includegraphics[width=0.85\linewidth]{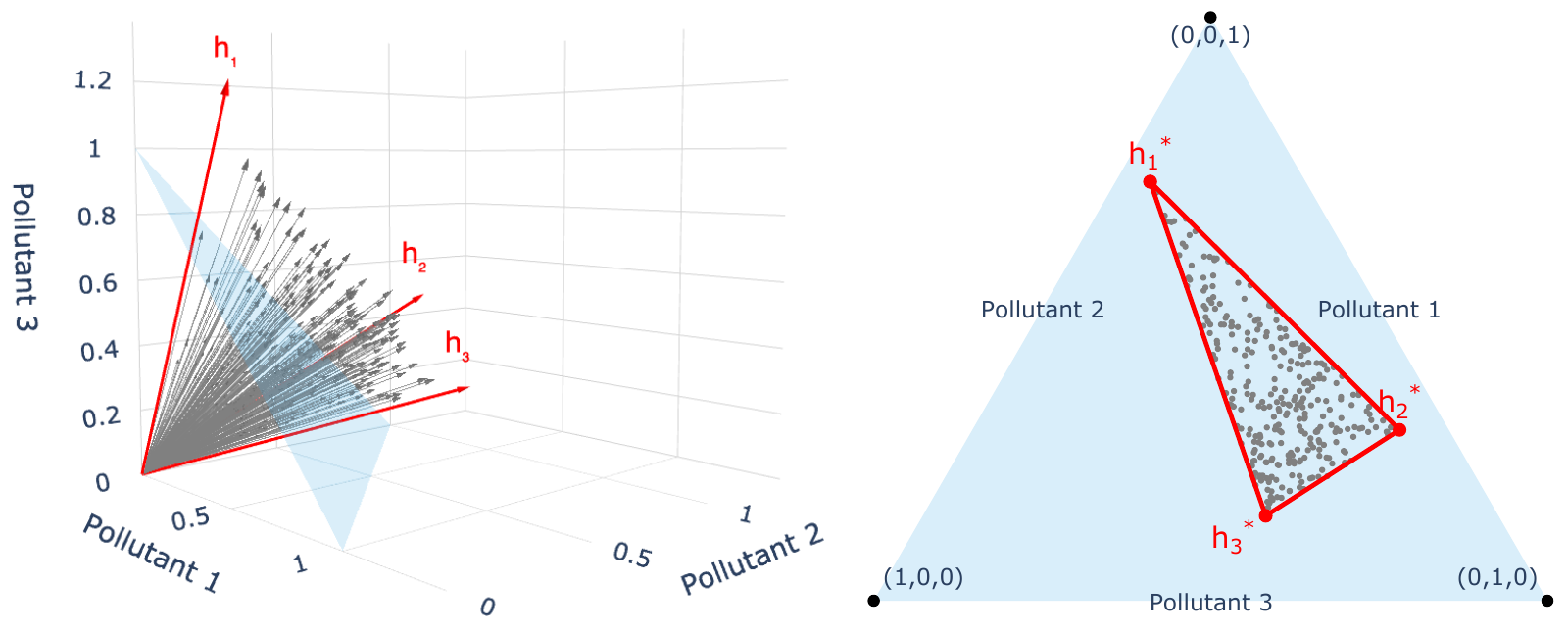}
    \caption{Geometric representation of multipollutant source apportionment (three pollutants, three sources)}
    \label{fig:cloud}
\end{figure}

To establish identification and consistency results, we specify data-generating assumptions. Following prior probabilistic approaches \citep{bandeen-roche_source_1991, park_multivariate_2001, jun_multivariate_2013}, we treat $H$ as fixed and $\{W_i\}_{i=1}^n$ as random by specifying assumptions on their distributions. We show in Section \ref{sec:w} that in the noiseless case, consistent estimation of $\{W_i\}_{i=1}^n$ up to column-wise scalings is possible, not just estimation of their distributions.

\begin{assumption}[Ergodicity and stationarity of source intensities]\label{asm:erg}
    $\{W_i\}_{i\geq 1}$ is an ergodic stationary process on $\mathbb R_+^K$ with marginal distribution $\mathbb P_W$ supported on a set $\mathbb S_W \subseteq \mathbb R_+^K$ and with finite mean $\mu=\mathbb E(W_i)$. 
\end{assumption} 

This allows temporal or spatio-temporal dependence in $\{W_i\}_{i\geq 1}$ without any parametric restrictions on $\mathbb P_W$ or on the dependence structure of the processes. This assumption covers many standard time-series models, including stable autoregressive (AR), moving-average (MA), and ARMA processes with a stable AR component. Strict stationarity of source intensities over space-time can be a strong assumption. For example, traffic and heating sources have pronounced diurnal and seasonal cycles. In such cases, the data can be stratified by group (hour of the day, day of the week, region, etc.), and the assumption is more plausible within each subgroup. 

We further impose the following condition on the support set $\mathbb S_W$ of the source intensities:
\begin{assumption}[Probabilistic separability]\label{asm:sep} 
There exist constants $c_k > 0$ such that $c_k e_k \in \mathbb S_W$ for every $k=1,\ldots,K$, where $e_k$ is the $k$th canonical basis vector in $\R^K$.
\end{assumption} 
This assumption ensures that, with positive probability, the process $\{W_i\}_{i\geq 1}$ produces realizations where one coordinate dominates. Unlike the traditional deterministic separability discussed in Section \ref{sec:georev}, which requires exact zeros in the $W$ matrix, this probabilistic version only requires the support of the distribution of $W$ to contain points arbitrarily close to $c_ke_k$. This relaxation is well-suited for source apportionment, where it is plausible for one source to occasionally dominate but unrealistic for all others to be completely absent. Under these two assumptions, we state the identifiability result.  

\begin{theorem}[Statistical identifiability]\label{thm:welldefn}
Suppose each data point $Y_i$ admits two NMF representations $Y_i^\top=W_i^\top H=\widetilde W_i^\top \widetilde H$, where $H$ and $\widetilde H$ are $K\times J$ non-negative matrices, \(\{W_i\}_{i\ge1}\) and \(\{\widetilde W_i\}_{i\ge1}\) are $K$-dimensional processes satisfying Assumptions \ref{asm:erg} -- \ref{asm:sep}, and at least one of $H$ or $\widetilde H$ has full row rank. Let $\Phi$, defined in \eqref{eq:contrib}, denote the source attribution matrix derived from $\mathbb P_W$ and $H$, and let $\widetilde\Phi$ be the analogous quantity based on $\mathbb P_{\widetilde W}$ and $\widetilde H$. Then $\widetilde \Phi = \Phi$ up to a permutation of rows. 
\end{theorem}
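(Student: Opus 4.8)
The plan is to proceed in three stages: identify the two conical hulls of the rows of $H$ and $\widetilde H$, identify their extremal rays, then propagate the resulting relation through the emission means. For the first stage, note that $Y_i^\top=W_i^\top H=\widetilde W_i^\top\widetilde H$ holds for every realization, so the law $\mathbb P_Y$ of $Y_i$ is simultaneously the pushforward of $\mathbb P_W$ under $w\mapsto H^\top w$ and of $\mathbb P_{\widetilde W}$ under $\widetilde w\mapsto\widetilde H^\top\widetilde w$, and is the same for all $i$ by stationarity. Since a finitely generated cone is closed and $\widetilde H^\top\widetilde w\in\cone(\widetilde\calH)$ for all $\widetilde w\in\R_+^K$, the support of $\mathbb P_Y$ lies in $\cone(\widetilde\calH)$; since Assumption \ref{asm:sep} places each $c_ke_k$ in the support of $\mathbb P_W$, the point $c_kh_k=H^\top(c_ke_k)$ lies in the support of $\mathbb P_Y$, hence in $\cone(\widetilde\calH)$, so $h_k\in\cone(\widetilde\calH)$ for every $k$ and $\cone(\calH)\subseteq\cone(\widetilde\calH)$. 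The symmetric argument gives equality, $\cone(\calH)=\cone(\widetilde\calH)=:\calC$, a pointed (being contained in $\R_+^J$) polyhedral cone. An alternative route uses Assumption \ref{asm:erg} directly: the empirical measure of $\{W_i\}$ converges weakly to $\mathbb P_W$, so the process enters every neighborhood of $c_ke_k$ infinitely often, yielding a subsequence $Y_{i_m}\to c_kh_k$ that stays in the closed cone $\cone(\widetilde\calH)$.

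For the second stage, assume without loss of generality that $H$ has full row rank (otherwise swap the roles of $H$ and $\widetilde H$). Then $h_1,\dots,h_K$ are linearly independent, so $\calC$ is a simplicial cone with exactly $K$ extremal rays, namely those through $h_1,\dots,h_K$. The $K$ vectors $\widetilde h_1,\dots,\widetilde h_K$ generate $\calC$; since fewer than $K$ vectors cannot generate a cone with $K$ extremal rays, no $\widetilde h_k$ is a non-negative combination of the others, so $\{\widetilde h_k\}$ is a minimal generating set of cardinality $K$. A minimal generating set of a pointed polyhedral cone whose size equals the number of extremal rays consists of exactly one representative per extremal ray, and two such sets for the same cone differ only by a permutation and a positive rescaling. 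Hence there are a permutation $\pi$ of $\{1,\dots,K\}$ and constants $d_k>0$ with $\widetilde h_k=d_kh_{\pi(k)}$; equivalently $\widetilde H=AH$, where $A$ is the monomial matrix with $A_{k,\pi(k)}=d_k$ and zeros elsewhere.

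For the third stage, substituting $\widetilde H=AH$ into $W_i^\top H=\widetilde W_i^\top\widetilde H$ and using that $H$ has full row rank (so $x^\top H=0$ forces $x=0$) gives $W_i=A^\top\widetilde W_i$ for every $i$; taking expectations yields $\mu=A^\top\widetilde\mu$, which unwinds to $\mu_{\pi(k)}=d_k\widetilde\mu_k$. Then $\widetilde\mu_k\widetilde H_{kj}=(\mu_{\pi(k)}/d_k)(d_kH_{\pi(k)j})=\mu_{\pi(k)}H_{\pi(k)j}$, so $\widetilde\phi_{kj}=\mu_{\pi(k)}H_{\pi(k)j}\big/\sum_{\ell}\mu_{\pi(\ell)}H_{\pi(\ell)j}=\phi_{\pi(k)j}$; that is, $\widetilde\Phi$ is $\Phi$ with rows permuted by $\pi$, which is the claim.

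I expect the first stage to be the main obstacle. One must argue carefully that the support identity for $\mathbb P_Y$ is legitimate---it is, because both factorizations describe the same random vector $Y_i$---invoke closedness of the finitely generated cone $\cone(\widetilde\calH)$ to pass to the limiting (or support) point, and recognize that Assumption \ref{asm:sep} is precisely what injects the canonical directions $e_k$, and hence the corners $h_k$, into the data cloud. The remaining two stages are linear algebra together with standard convex-geometry facts about polyhedral cones, once the cone equality is in hand.
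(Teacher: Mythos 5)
Your proof is correct and reaches the paper's conclusion along the same overall skeleton---separability places the rows of each factor matrix inside the closed cone generated by the other, the two generator sets therefore agree up to a permutation and positive row scalings, and this monomial relation cancels in $\phi_{kj}$ after passing to the means---but two sub-steps are executed by genuinely different means. First, for the cone inclusion the paper runs an ergodic-subsequence argument (Lemma \ref{lem:erg}, via Birkhoff's theorem) to produce realizations $Y_{i(m)}\to c_k\widetilde h_k$ lying in the closed cone $\cone(\calH)$, whereas your primary route is a pushforward-of-supports argument: $c_kh_k$ lies in the support of $\mathbb P_Y$, which is contained in the closed set $\cone(\widetilde\calH)$. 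That is cleaner and makes explicit that ergodicity is not actually needed for identifiability, only the support condition and stationarity of the means; your ``alternative route'' is exactly the paper's. Second, for the monomial structure the paper is purely algebraic: $\widetilde H=AH$ and $H=B\widetilde H$ with $A,B\ge 0$, full row rank gives $BA=I_K$, and a non-negative matrix with non-negative inverse is monomial (citing Berman and Plemmons). You instead argue geometrically that $\cone(\calH)$ is simplicial with extremal rays exactly those through the linearly independent $h_k$, and that any $K$-element generating set of a pointed cone with $K$ extremal rays must consist of one positive representative per ray. Both are standard; yours trades the monomial-inverse theorem for the extremal-ray facts, each of which you should justify in a written version (in particular, that every generating set of a pointed polyhedral cone meets every extremal ray). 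The final stage---cancelling $H$ by full row rank, taking expectations, and simplifying $\phi_{kj}$---is identical to the paper's up to the order of the two operations.
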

Hence $\Phi$ is well defined and statistically identifiable up to reordering of the sources, even though the NMF representation of the data matrix $Y$ need not be unique.

\subsection{Measurement error} \label{sec:error}

The identifiability of $\Phi$ remains unchanged under separable multiplicative measurement error. Both additive and multiplicative error structures have been considered in source apportionment \citep{krall_statistical_2019}. Additive error models can be inappropriate for non-negative data. In the additive model $Y_{ij} = \gamma_{ij}+ \varepsilon_{ij}$, where $\gamma_{ij} = \sum_{k=1}^K W_{ik}H_{kj}$ is the noise-free signal satisfying the NMF in \eqref{eq:main} and $\varepsilon_{ij}$ are zero-mean errors, non-negativity of $Y_{ij}$ requires $\varepsilon_{ij} \geq - \gamma_{ij}$. Hence, the error is not independent of the mean. A more natural assumption is multiplicative error, which becomes an additive error model in log-scale: $Y_{ij} = \epsilon_{ij} \gamma_{ij}$, or equivalently, $\log Y_{ij} = \log \gamma_{ij}+\log\epsilon_{ij}$ with $\epsilon_{ij}>0$. Under this model, the error $\epsilon_{ij}$ is independent of the signal $\gamma_{ij}$. A special case is proportional error, $\epsilon_{ij} = 1+\rho_{ij}$, where $\rho_{ij}$ is a mean-zero relative error. %This structure naturally incorporates relative error, $\varepsilon_{ij}=\rho_{ij}\gamma_{ij}$, yielding $\epsilon_{ij} = 1+\rho_{ij} > 0$. %where the relative error $\rho_{ij}$ is independent of the signal $\gamma_{ij}$. This yields the multiplicative error model $Y_{ij} = \epsilon_{ij} \gamma_{ij}$ where $\epsilon_{ij} = 1+\rho_{ij} > 0$, $\mathbb{E}(\epsilon_{ij})=1$, and $\epsilon_{ij} \perp \gamma_{ij}$. Therefore, we proceed with the multiplicative error model to naturally accommodate non-negative data. 

Suppose the observed measurements are contaminated with multiplicative error, $Y_{ij} = \epsilon_{ij}\,\gamma_{ij}$, where $\epsilon_{ij}>0$. We impose a separable error structure, $\epsilon_{ij}=a_ib_j$, for $a_i > 0,\; b_j > 0,\;i\ge1,\; j=1,\ldots,J$. Since the decomposition $a_ib_j = (a_i/c)(cb_j)$ is unchanged for any $c>0$, we adopt $\mathbb{E}(a_i)=1$ for identifiability. We further assume $\{a_i\}_{i \ge 1}$ is independent of $\{W_i\}_{i \ge 1}$ and of $\{b_j\}_{j=1}^J$. %BJ: {a_i} \indep {W_i} stronger and unnecessary
The separable error model, $Y_{ij}=a_i b_j\sum_{k=1}^K W_{ik}H_{kj}$, has an intuitive interpretation. The error component $a_i$ represents sample-specific random error shared across pollutants, such as meteorology-driven dilution or ventilation effects (e.g., precipitation, boundary layer height). The component $b_j$ represents pollutant-specific scaling, such as pollutant-specific instrumental calibration. While we treat the $b_j$'s as fixed, reflecting that such scaling effects are typically sensor-dependent and stable, the identifiability result below extends to random $b_j$'s, provided they are independent of $\{a_i\}_{i\ge 1}$ and $\{W_i\}_{i\ge 1}$.

Let $D_b:=\text{diag}(b_1,\ldots,b_J)$ and define the reparameterization $W'_i := a_i W_i\in\mathbb R_+^K$ and $H{'} := H D_b\in\mathbb R_+^{K\times J}.$ Then for every $i$, $Y_i^\top = a_i W_i^\top H D_b = (W'_i)^\top H'$, so $\{Y_i\}_{i \ge 1}$ admits an NMF representation with $\{W'_i\}_{i \ge 1}$ and $H'$.

\begin{corollary}[Separable log-measurement error preserves Theorem~\ref{thm:welldefn}]\label{cor:logerr}
Suppose each data point $Y_i$ admits two representations
\begin{equation} \label{eq:two_rep_logerr}
Y_i^\top
= a_i\, W_i^\top H D_b
= \tilde a_i\, \widetilde W_i^\top \widetilde H D_{\tilde b},
\end{equation}
where at least one of $H,\widetilde H\in\mathbb R_+^{K\times J}$ has full row rank, and the processes $\{W_i\}_{i\ge1}$ and $\{\widetilde W_i\}_{i\ge1}$ satisfy Assumptions~\ref{asm:erg}--\ref{asm:sep}. Let $D_b=\text{diag}(b_1,\ldots,b_J)$ and $D_{\tilde b}=\text{diag}(\tilde b_1,\ldots,\tilde b_J)$ have strictly positive diagonal entries. Assume $\{a_i\}_{i\ge 1}$ is a stationary and ergodic process with positive values and $\mathbb E(a_i)=1$, independently of $\{W_i\}_{i\ge 1}$ and $\{b_j\}_{j=1}^J$, and analogously for $\{\tilde a_i\}_{i\ge 1}$. Define the reparameterization $W_i':=a_i W_i$ and $H':=H D_b$, and similarly $\widetilde W_i':=\tilde a_i \widetilde W_i$ and $\widetilde H':=\widetilde H D_{\tilde b}$. Let $\Phi'$ denote the source attribution matrix derived from $\mathbb P_{W'}$ and $H'$, and let $\widetilde\Phi'$ derived from $\mathbb P_{\widetilde W'}$ and $\widetilde H'$. Then $\Phi'=\widetilde\Phi'$ up to a permutation of rows. Moreover, $\Phi'=\Phi$, where $\Phi$ is the source attribution matrix from $\mathbb P_{W}$ and $H$.
\end{corollary}

Corollary \ref{cor:logerr} shows that the identifiability and value of the source attribution matrix $\Phi$ are preserved under multiplicative separable measurement error, without assuming any parametric form for the error distribution.

\section{Consistent estimation}\label{sec:est}

\subsection{Convex hull and vertex estimation} 

We now provide an algorithm to consistently estimate $\Phi$. Figure \ref{fig:cloud} (left) illustrates that with sufficiently rich sampling, the cone should fill up, and one should be able to identify the corner vectors $h_k$ (up to scalar multiples) from the data. To visualize the identification of these extremal rays, we intersect the cone with the canonical simplex and show the projections in Figure \ref{fig:cloud} (right). The resulting scatter is contained within the triangle with vertices $h_1^*, h_2^*, h_3^*$, where each $h_k^*$ is the normalized version of $h_k$. Since $\Phi$ is invariant to row-scaling of $H$ by Theorem \ref{thm:welldefn}, we will use a row-normalized $H$ to consistently estimate $\Phi$. 

Let $d_k >0$ be the sum of the entries in $h_k$, and set $D_h=\text{diag}(d_1,\ldots,d_K)$. Define the row-stochastic matrix $H^*=D_h^{-1}H$. Then we can write $Y=\widetilde W H^*$ where $\widetilde W= WD_h$. Let $r=(r_1,\ldots,r_n)^\top=\widetilde W 1_K$ be the vector of row sums of $\widetilde W$ where $1_K$ is the $K$-vector of ones. Set $W^*_i=\widetilde W_i/r_i$ and form $W^*$ by stacking the row vectors $W_i^{*\top}$. Let $Y^* := W^*H^*$. Since both $W^*$ and $H^*$ are row-stochastic, $Y^*$ is also row-stochastic. Moreover, $Y=(\text{diag}(r))W^*H^* = (\text{diag}(r)) Y^*$, yielding $Y1_J = r$. Hence, $r$ can be computed directly from the observed data $Y$. We therefore estimate $H^*$ based on the row-normalized data $Y^* = (\text{diag}(r))^{-1}Y$.

Let $\Delta^{J-1} = \{ v \in \R^J: v_j \geq 0, \sum_{j=1}^J v_j =1 \}$ denote the canonical simplex in $\R^J$. Each $Y_i^* = \sum_{k=1}^K W^*_{ik}h^*_{k}$ is a convex combination of the rows $h^*_{k}$ of $H^*$ with weights $W^*_{ik}$. Hence, every $Y_i^*$ belongs to the convex hull generated by the rows of $H^*$, $\conv(\calH^*) = \conv\{h^*_1,\dots,h^*_K\}$. Geometrically, $\conv(\calH^*)$ is a convex polytope and can be viewed as the intersection of $\cone(\calH)$ with $\Delta^{J-1}$, as illustrated in Figure \ref{fig:cloud}. When $W_i^*$ lies near a vertex of $\Delta^{K-1}$, the corresponding $Y^*_i$ is close to one of the vertices $h^*_k$. As $\mathbb P_{W^*}$ has support near every corner of $\Delta^{K-1}$, as $n$ increases the sample $\{Y_i^*\}_{i=1}^n$ will reveal all vertices of $\conv(\calH^*)$, which are exactly the rows of $H^*$. Thus, $H^*$ can be estimated by identifying the extreme points of the point cloud formed by the rows of $Y^*$. Formally, Theorem \ref{thm:hull} below states that the sample convex hull of $Y^*$ consistently estimates $\conv(\calH^*)$. Throughout, $\to_{a.s.}$ and $\to_{p}$ denote almost sure and in-probability convergence, respectively.

\begin{theorem}[Hausdorff consistency of the sample convex hull]\label{thm:hull}
Suppose $Y_i^\top = W_i^\top H$ where $H$ is a $K \times J$ non-negative matrix of full row rank, %we need full row rank to ensure ||H*||_op > 0
and $\{W_i\}_{i\ge 1}$ is a $K$-dimensional process satisfying Assumptions \ref{asm:erg} -- \ref{asm:sep}. Let $Y^*_i$ denote the normalized $Y_i$ and $H^*$ the row-normalized $H$. Then, 
\ifbiometrika
$d_{\mathrm{Haus}}\big(\conv\{Y^*_1,\ldots,Y^*_n\},\ \conv\{h_{1}^*,\ldots,h_{K}^*\}\big)\to_{a.s.} 0$
\else
$$
d_{\mathrm{Haus}}\big(\conv\{Y^*_1,\ldots,Y^*_n\},\ \conv\{h_{1}^*,\ldots,h_{K}^*\}\big)\to_{a.s.} 0
$$ 
\fi 
where $d_{\mathrm{Haus}}$ is the Hausdorff distance.
\end{theorem}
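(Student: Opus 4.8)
\noindent\textit{Proof plan.} The plan is to bound the Hausdorff distance by its two one-sided components and treat them separately: one inclusion is automatic, and the reverse direction follows once I show that the point cloud gets arbitrarily close to each of the $K$ extreme points of the limiting polytope, which is where Assumptions \ref{asm:erg}--\ref{asm:sep} enter, through the ergodic theorem.

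First I would note that since each $W_i^*$ is a probability vector, $Y_i^* = \sum_{k} W_{ik}^* h_k^* \in P := \conv\{h_1^*,\ldots,h_K^*\}$, and as $P$ is convex, $P_n := \conv\{Y_1^*,\ldots,Y_n^*\} \subseteq P$ for every $n$; hence $\sup_{y \in P_n} \operatorname{dist}(y,P) = 0$ and only $\sup_{z \in P} \operatorname{dist}(z, P_n)$ remains to be controlled. Next I would reduce this term to the vertices: if for each $k$ some sample point satisfies $\|Y_{i_k}^* - h_k^*\| < \epsilon$, then for any $z = \sum_k \lambda_k h_k^* \in P$ the convex combination $\sum_k \lambda_k Y_{i_k}^*$ lies in $P_n$ and is within distance $\epsilon$ of $z$; therefore it suffices to prove that, almost surely, $\min_{i \le n} \|Y_i^* - h_k^*\| \to 0$ for each $k$. (Full row rank of $H$ makes $h_1^*,\ldots,h_K^*$ affinely independent, so $P$ is a genuine $(K-1)$-simplex with these as its vertices, matching the goal of recovering the rows of $H^*$; the convergence argument itself does not rely on this.)

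The crux is this vertex step. The normalization map $w \mapsto Y^*(w)$, with $Y^*(w)^\top = w^\top H / (w^\top H 1_J)$, is continuous at $w = c_k e_k$ because the denominator there equals $c_k d_k > 0$ (recall $c_k > 0$ and $d_k = \sum_j H_{kj} > 0$ since no row of $H$ is null), and $Y^*(c_k e_k) = h_k^*$. So, given $\epsilon > 0$, I choose $\delta > 0$ with $\|w - c_k e_k\| < \delta \Rightarrow \|Y^*(w) - h_k^*\| < \epsilon$ and set $A = \{w \in \R_+^K : \|w - c_k e_k\| < \delta\}$. By Assumption \ref{asm:sep}, $c_k e_k$ lies in the support $\mathbb S_W$ of $\mathbb P_W$, so $\mathbb P_W(A) > 0$; by Assumption \ref{asm:erg} and Birkhoff's ergodic theorem, almost surely $\tfrac1n\sum_{i=1}^n \mathbf 1\{W_i \in A\} \to \mathbb P_W(A) > 0$, so almost surely $W_i \in A$ for infinitely many $i$, whence $\|Y_i^* - h_k^*\| < \epsilon$ for those $i$. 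Intersecting these almost-sure events over $k = 1,\ldots,K$ and over a sequence $\epsilon_m \downarrow 0$ (a countable intersection, still almost sure) gives a single event on which $\min_{i \le n}\|Y_i^* - h_k^*\| \to 0$ for every $k$, and combining with the first two steps yields $d_{\mathrm{Haus}}(P_n, P) \to 0$ almost surely.

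I expect the main obstacle to be precisely this third step: converting the purely topological separability hypothesis $c_k e_k \in \mathbb S_W$ into a statement about the dependent sequence $\{W_i\}$ visiting arbitrarily small neighborhoods of $c_k e_k$ infinitely often — which is exactly what ergodicity supplies via Birkhoff's theorem — and then transporting those neighborhoods through the ratio map $Y^*$ to neighborhoods of $h_k^*$, for which one must verify the denominator stays bounded away from zero near $c_k e_k$. The remainder — the automatic inclusion, the convexity interpolation over vertices, and assembling one almost-sure event — is routine.
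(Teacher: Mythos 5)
Your proposal is correct and follows essentially the same route as the paper's proof: the automatic inclusion $S_n\subseteq\conv\{h_1^*,\ldots,h_K^*\}$, Birkhoff's ergodic theorem applied to small balls around $c_ke_k$ to show the sample visits each vertex neighborhood infinitely often, and convex interpolation over the $K$ near-vertex sample points to bound the directed distance uniformly. The only cosmetic difference is that you invoke continuity of the normalization map qualitatively where the paper carries out the corresponding bound explicitly via $\|H^*\|_{\mathrm{op}}$.
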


Previous consistency results for sample convex hulls assume iid observations \citep{brunel_uniform_2019}. Separately, work in topological data analysis considers dependent stationary sequences, establishing Hausdorff convergence \citep{kallel_topological_2024} and deriving convergence rates in expectation \citep{louhichi_sharp_2025}. However, these results require additional regularity beyond stationarity and ergodicity, such as a positive minimal index or $\beta$-mixing. Neither line of research has explored implications of their theoretical results for source apportionment. Theorem \ref{thm:hull} fills this gap by proving convex hull consistency for dependent data only under stationarity and ergodicity, realistic assumptions in the context of source apportionment. 

We now use this convex hull consistency result to recover $H^*$ and then $\Phi$. As the true $H^*$ has $K$ rows, the set conv$(\calH^*)$ is a $K$-vertex polytope. Since the sample convex hull converges to this polytope, a well-chosen set of $K$ points from the sample hull will converge to the rows of $H^*$. We use a simple and empirically effective maximum-volume $K$-polytope estimator. Corollary \ref{cor:maxvol} shows its consistency. Other rules of selecting $K$ vertices from the sample hull, such as minimizing the Hausdorff distance of the resulting $K$-polytope to the sample hull, are expected to be consistent. 

\begin{corollary}[Consistency of the maximum-volume $K$-vertex estimator]\label{cor:maxvol}
Under the conditions of Theorem \ref{thm:hull}, let
$\widehat \calH^*_n=\{\widehat h_{n,1}^*,\ldots,\widehat h_{n,K}^*\}\subset S_n$
be the $K$-point subset of $S_n=\conv\{Y_1^*,\ldots,Y_n^*\}$ that corresponds to the $K$-polytope with maximum volume. 
%Then \(d_{\mathrm{Haus}}\Big(\{\widehat h^*_{n,1},\ldots,\widehat h^*_{n,K}\},\ \{h_{1}^*,\ldots,h_{K}^*\}\Big)\to_{a.s.} 0\).
Then \(d_{\mathrm{Haus}}\Big(\widehat \calH^*_n,\ \calH^*\Big)\to_{a.s.} 0\).
\end{corollary}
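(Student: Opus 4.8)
The plan is to read off the corollary from Theorem~\ref{thm:hull} using two elementary ingredients: (i) the $(K-1)$-dimensional volume of the convex hull of $K$ points, $\mathrm{vol}(\conv\{v_1,\dots,v_K\})$, is a continuous function of $(v_1,\dots,v_K)$; and (ii) among all $(K-1)$-simplices with vertices in the polytope $P:=\conv(\calH^*)=\conv\{h_1^*,\dots,h_K^*\}$, the unique volume-maximizer, up to relabeling of vertices, is $P$ itself. Since $H$ has full row rank, the $h_k^*$ are linearly independent; as they also lie on the hyperplane $\{\sum_j v_j=1\}$ they are affinely independent, so $P$ is a genuine $(K-1)$-simplex with $\mathrm{vol}(P)>0$, which is what makes (ii) meaningful.

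For (ii) I would use the row-stochastic determinant identity. Any $v\in P$ has barycentric coordinates $\lambda\in\Delta^{K-1}$ with $v=\sum_k\lambda_k h_k^*$; collecting $v_1,\dots,v_K\in P$ into a row-stochastic matrix $\Lambda=(\lambda_{ik})$ and using $\Lambda 1_K = 1_K$, the augmented-coordinate (Cayley–Menger type) volume formula gives $\mathrm{vol}(\conv\{v_1,\dots,v_K\})=|\det\Lambda|\,\mathrm{vol}(P)$. Because $\det$ is multilinear in the rows of $\Lambda$ and the feasible set $(\Delta^{K-1})^K$ is a product of simplices, $|\det\Lambda|$ is maximized at a product of vertices, i.e.\ a $0/1$ matrix with one $1$ per row, for which $|\det|=1$ iff it is a permutation matrix and $|\det|=0$ otherwise; hence $|\det\Lambda|\le1$. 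For the equality case, if $|\det\Lambda|=1$ then $\det\Lambda\ne0$ forces rows $2,\dots,K$ to be linearly independent, so $\det$ restricted to the first row is a nonconstant linear functional on $\Delta^{K-1}$ and is maximized only at a vertex $e_k$; iterating over all rows shows $\Lambda$ is a permutation matrix, i.e.\ $\{v_1,\dots,v_K\}=\{h_1^*,\dots,h_K^*\}$. Thus $\mathrm{vol}(\conv\{v_1,\dots,v_K\})\le\mathrm{vol}(P)$ with equality exactly at relabelings of $\calH^*$.

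The remaining step is a standard argmax-consistency argument on the almost-sure event $\varepsilon_n:=d_{\mathrm{Haus}}(S_n,P)\to0$. From $S_n\subseteq P^{\varepsilon_n}$ (the closed $\varepsilon_n$-neighborhood of $P$, compact) and continuity of volume together with $P^{\varepsilon_n}\to P$ in Hausdorff distance, $\max_{v_1,\dots,v_K\in S_n}\mathrm{vol}(\conv\{v_i\})\to\mathrm{vol}(P)$; and choosing $y_{n,k}\in S_n$ with $\|y_{n,k}-h_k^*\|\le\varepsilon_n$ shows the maximum is at least $\mathrm{vol}(\conv\{y_{n,1},\dots,y_{n,K}\})\to\mathrm{vol}(P)$, so the optimal value $\mathrm{vol}(\conv\widehat\calH^*_n)\to\mathrm{vol}(P)>0$ and $\widehat\calH^*_n$ consists of $K$ affinely independent points for all large $n$. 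If $d_{\mathrm{Haus}}(\widehat\calH^*_n,\calH^*)\not\to0$, pass to a subsequence bounded below by some $\delta>0$ and then a further subsequence along which $\widehat h^*_{n,k}\to\bar h_k$ for each $k$ (the points lie in the bounded set $P^{\varepsilon_1}$), with $\bar h_k\in P$ since $P$ is closed and $\varepsilon_n\to0$. Continuity of volume gives $\mathrm{vol}(\conv\{\bar h_1,\dots,\bar h_K\})=\mathrm{vol}(P)$, so by (ii) $\{\bar h_1,\dots,\bar h_K\}=\calH^*$ and hence $d_{\mathrm{Haus}}(\widehat\calH^*_n,\calH^*)\to0$ along this further subsequence, contradicting the lower bound $\delta$. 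Therefore $d_{\mathrm{Haus}}(\widehat\calH^*_n,\calH^*)\to0$ almost surely.

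The genuinely load-bearing part is the uniqueness in (ii): that only relabelings of $\calH^*$ achieve the maximal inscribed volume. This is exactly where the simplex structure of $P$, equivalently the full-row-rank hypothesis on $H$, is indispensable; for a non-simplicial polytope the maximum-volume inscribed $K$-vertex polytope need not be unique and the corollary would fail. Everything else is routine, the only care being to exclude degeneracy of the optimal polytope and to manage the almost-sure and subsequence bookkeeping; the same argument applies verbatim to any selection rule whose objective is continuous and uniquely maximized at $\calH^*$ over $P^K$, e.g.\ minimizing $d_{\mathrm{Haus}}$ of the $K$-polytope to $S_n$.
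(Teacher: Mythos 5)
Your proposal is correct and follows essentially the same route as the paper's proof: continuity of the $(K-1)$-volume functional, uniqueness (up to relabeling) of the maximum-volume inscribed $K$-point subset as the vertex set of the simplex $\conv(\calH^*)$, convergence of the optimal value by picking points of $S_n$ near each vertex, and a compactness/subsequence argument. The only substantive difference is that you justify the uniqueness step with an explicit row-stochastic determinant identity where the paper simply invokes that a proper convex subset has strictly smaller volume and that extreme points must belong to the generating set; one small imprecision there is that a nonconstant linear functional on a simplex need not be maximized \emph{only} at a vertex, although the conclusion that $|\det\Lambda|=1$ forces $\Lambda$ to be a permutation matrix is easily secured (e.g., via Hadamard's inequality combined with $\|\lambda_i\|_2\le\|\lambda_i\|_1=1$).
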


Corollary \ref{cor:maxvol} gives a consistent estimator $\widehat H^*$ of $H^*$, where the $k$th row of $\widehat H^*$ is $\widehat h^*_{n,k}$. However, as discussed before, $H^*$ may not be scientifically interpretable when the pollutants are measured in incompatible units. Next, we estimate $\Phi$ using this estimate of $H^*$. 

\subsection{Estimation of source attribution matrix}

Since $Y_i^\top = \widetilde W_i^\top H^*$, by Theorem \ref{thm:welldefn}, the source attribution matrix can be written in terms of $H^*$ and $\widetilde W$:
\begin{equation}\label{eq:phiscaled}
\phi_{kj}
=
\frac{\widetilde \mu_k H^*_{kj}}
{\sum_{\ell=1}^K \widetilde \mu_\ell H^*_{\ell j}},
\qquad
\widetilde \mu_k=\mathbb E(\widetilde W_{ik}).
\end{equation}
Define the augmented matrix
$H_{\mathrm{aug}}^*=[\,H^*\ \ 1_K\,]\in\mathbb R^{K\times (J+1)}$ and its right inverse
\ifbiometrika
$
R(H^*)=H_{\mathrm{aug}}^{*\top}
\bigl(H_{\mathrm{aug}}^*H_{\mathrm{aug}}^{*\top}\bigr)^+
\in\mathbb R^{(J+1)\times K},
$
\else
$$
R(H^*)=H_{\mathrm{aug}}^{*\top}
\bigl(H_{\mathrm{aug}}^*H_{\mathrm{aug}}^{*\top}\bigr)^+
\in\mathbb R^{(J+1)\times K},
$$ 
\fi 
where \((\cdot)^+\) denotes the Moore-Penrose inverse. Let $r_i=Y_i^\top1_J$ be the row sum of $Y_i$. Then %$\mathbb E(Y_i^\top)=\widetilde\mu^\top H^*$ and 
$\mathbb E(r_1)=\mathbb E(Y_1^\top) 1_J =\widetilde\mu^\top H^*1_J = \widetilde\mu^\top 1_K$ due to row-stochasticity of $H^*$, and thus
\ifbiometrika
$
\bigl[\mathbb{E}(Y_1^\top)\ \ \mathbb{E}(r_1)\bigr]\,R(H^*)
  = \widetilde\mu^\top H^*_{\mathrm{aug}}\,R(H^*)
  = \widetilde\mu^\top.
$
\else
$$
\bigl[\mathbb{E}(Y_1^\top)\ \ \mathbb{E}(r_1)\bigr]\,R(H^*)
  = \widetilde\mu^\top H^*_{\mathrm{aug}}\,R(H^*)
  = \widetilde\mu^\top.
$$ 
\fi 
A natural plug-in estimator of $\widetilde\mu^\top$ is then $\widetilde m_n^\top = \bigl[\,\bar Y_n^\top\ \ \bar r_n\,\bigr]\,R(\widehat H^*)$, where $\bar Y_n = \sum_{i=1}^n Y_i/n$ and $\bar r_n = \sum_{i=1}^n r_i/n$. We work with the augmented matrix because the estimator $\widehat H^*$ may fail to have full row rank in finite samples. Affine independence of $\widehat H^*$, which is easier to guarantee in practice than linear independence, ensures full row rank of $\widehat H_{\mathrm{aug}}^*$. For numerical stability and robustness, we implement our algorithm with the augmented matrix, and we state the theoretical results accordingly.

\begin{corollary}\label{cor:muhat-consistent}
Under the assumptions of Theorem \ref{thm:hull}, if $\widehat H^* \to_{p} H^*$ then $\widetilde m_n \to_{p} \widetilde\mu$. If additionally $\widehat H^* \to_{a.s.} H^*$, then $\widetilde m_n \to_{a.s.} \widetilde\mu$.
\end{corollary}

Combining consistent estimators $\widehat H^*$ and $\widetilde m_n$ therefore yields a consistent estimator of $\Phi$, up to row permutations. This result extends to the setting with separable multiplicative measurement error (Section \ref{sec:error}), and to partial observation of pollutants. If only a subset $S \subset \{1,\ldots,J\}$ of pollutants is observed with $|S| \geq K$ and the submatrix $H_S$ of full row rank, applying the procedure to $Y_S$ yields a consistent estimator of $\Phi_S$, the submatrix of $\Phi$ with columns restricted to $S$.

The complete procedure, referred to as GeomNMF, is summarized in Algorithm \ref{alg:simple}, while a more detailed algorithm is given in Supplemental Section \ref{sec:algodetails}. The estimation of $H^*$ can also be done using existing endmember extraction (polytope vertex estimation) methods such as N-FINDR, VCA, or XRAY. While these methods are greedy heuristics that do not guarantee the global optimum, we perform a global exhaustive search over the sample hull vertices, for which we have the consistency guarantee in Corollary \ref{cor:maxvol}. As the size of the minimal set defining the sample hull ($N$) is typically much less than the sample size ($n$), the complexity of the combinatorial search over $N \choose K$ subsets is manageable with modern computing power. 

However, in practice, $N$ may be too large when $n$ is large and/or the data dimension (number of pollutants $J$) is high. To improve scalability, we suggest two heuristics. First, exact convex hull computation may be replaced by a directional search for extreme points, in which we sample many random directions and retain observations that are most extreme along those directions and their opposites, following the idea of the pixel purity index approach \citep{boardman_mapping_1995}. Second, the candidate vertices on the sample hull may be pruned by removing geometrically redundant vertices, since many empirical extremes are nearly indistinguishable. These steps substantially improve scalability while retaining the essential geometry of the search space. However, when either heuristic is used, the solution may no longer be globally optimal over the original sample hull vertices. In this case, we recommend an optional refinement step: an N-FINDR-style greedy search initialized at the estimated vertices, which performs local swaps to recover extreme points that the directional search or pruning may have missed. Extensive numerical experiments in Section \ref{sec:sim} confirm that the proposed estimator of $\Phi$ becomes increasingly accurate as sample size grows, even with these heuristics.

\ifbiometrika
\begin{algorithm}[h]
\caption{\textsc{GeomNMF: Source attribution matrix estimation}}\label{alg:simple}
\vskip -0.8em \KwIn{Data matrix $Y\in\mathbb R_{+}^{n\times J}$, Number of sources $K\, (<J)$}
\KwOut{$\widehat{\Phi}$}
$r \gets Y1_J$; $Y_i^{*} \gets Y_i/r_i$; $S_n \gets \conv(\{Y_1^*,\ldots,Y_n^*\})$\; 
$\widehat H^{*} \gets \arg\max_{V\subset S_n,\ |V|=K}\ \mathrm{vol}(V)$; 
$\widehat H_{\mathrm{aug}}^{*} \gets [\widehat H^{*}\   1_{K}]$; $R \gets \widehat H_{\mathrm{aug}}^{*T}(\widehat H_{\mathrm{aug}}^{*}\widehat H_{\mathrm{aug}}^{*T})^{+}$\; $\widetilde m^\top_n \gets [\overline Y_n^{T}\ \ \bar r_n]R$; $\widehat \Phi \gets$ eq \eqref{eq:phiscaled} with $\widetilde m_n$ and $\widehat H^{*}$;
\end{algorithm}
\else
\begin{algorithm}[t]
\caption{\textsc{GeomNMF: Source attribution matrix estimation}}
\label{alg:simple}
\begin{algorithmic}[1]
\Input $Y\in\mathbb{R}_+^{n\times J}$, number of sources $K<J$. 
\Output $\widehat \Phi$
\State \textbf{Row normalization:} set $r \gets Y1_J$ and \ $Y^* \gets \left(\text{diag}(r)\right)^{-1}Y$.
\State \textbf{Sample hull:} compute convex hull $S_n$ of the rows of $Y^*$.
\State \textbf{Max-volume subset:} set $\widehat \calH^*_n = \arg\max_{\{v_1,\dots,v_K\}\subset S_n} \ \mathrm{vol}_{K-1}(\conv\{v_1,\dots,v_K\})$ and let $\widehat H^*$ be the matrix with these vectors as rows. 
\State \textbf{Affine right-inverse:} $R \gets \widehat H_{\mathrm{aug}}^{*T}(\widehat H_{\mathrm{aug}}^{*}\widehat H_{\mathrm{aug}}^{*T})^{+}$.
\State \textbf{Sample mean:} $\widetilde m^\top_n \gets [\overline Y_n^{T}\ \ \bar r_n]R$.
\State \textbf{Source attributions:} $\widehat \Phi \gets$ \eqref{eq:phiscaled} with $\widetilde m_n$ and $\widehat H^{*}$.
\State \textbf{Return} $\widehat \Phi$.
\end{algorithmic}
\end{algorithm}
\fi

\subsection{Estimation of source intensity} \label{sec:w}
In source apportionment, the source intensities $W_i$ are of interest for understanding how emissions vary over time or space. However, due to the scaling indeterminacy, $W_i$ is only identifiable up to source-specific scalings. Since $Y = WH = \widetilde{W}H^*$, it is natural to estimate $\widetilde{W}$ after recovering $H^*$. The next proposition shows that once $H^*$ is consistently estimated, $\widetilde{W}_i$ can also be consistently recovered for each $i$.

\begin{proposition}\label{prop:wtildeplugin}
Assume the setting of Theorem \ref{thm:hull}. Let $\widehat{H}^*$ be a row-permuted estimator with $\widehat{H}^* \to_p H^*$. For each observation $Y_i$, let $\widehat{\widetilde{W}}_i^\top = \bigl[\,Y_i^\top\ \ r_i\,\bigr]\,R(\widehat{H}^*)$. Then $\widehat{\widetilde{W}}_i \to_p \widetilde{W}_i$ for each fixed $i$. If $\widehat{H}^* \to_{a.s.} H^*$, then $\widehat{\widetilde{W}}_i \to_{a.s.} \widetilde{W}_i$ for each fixed $i$.
\end{proposition}
% establishing uniform in all i rate is tricky. it is hard to generalize to empirical sum of g(W).

The estimated intensities $\widehat{\widetilde{W}}_i$ enable analysis of source variations across time or space. However, the intensities should not be compared across sources even at the same time point due to the scaling indeterminacy. Finally, Proposition \ref{prop:wtildeplugin} applies to the noiseless case and does not extend to the noisy setting in Section~\ref{sec:error}. Unless replicate measurements are available for each pollutant-time combination, the intensities generally cannot be recovered in the presence of measurement error.

\section{Numerical experiments}\label{sec:sim}

\subsection{Independent and identically distributed $W$}
\label{sec:sim_iidW}

We simulate an $n\times J$ non-negative data matrix $Y$ as $Y=WH$, with $W\in\R_+^{n\times K}$ and $H\in\R_+^{K\times J}$. We set $J=10$ and $K=5$. We construct $H$ as follows. A large set of candidate vectors in $\mathbb{R}^J$ is sampled with independent and identically distributed (iid) entries from an $\mathrm{Exp}(1)$ distribution. The convex hull of the candidate set is then computed, and $K$ of its vertices are selected to form the rows of $H$. For this set of simulations, we assume $W$ is iid across rows. Independently for each column $k\in\{1,\dots,K\}$, we specify a finite mixture on $\log W_{\cdot k}$:
$$
\log W_{ik} \mid Z_{ik}=c \sim \calN(\mu_{kc}, \sigma_{kc}^2), 
\qquad \mbox{pr}(Z_{ik}=c)=\pi_{kc},
$$
with mixture components $C_k \sim \mathrm{Pois}(3)+1$, mixture weights $\pi_{k\cdot} \sim \mathrm{Dirichlet}(1_{C_k})$, and component parameters $\mu_{kc} \sim \mathrm{Unif}(-1,1)$ and $\sigma_{kc}\sim \mathrm{Unif}(0.1,1)$. This yields strictly positive $W_{ik}=\exp(\log W_{ik})$. The population mean for column $k$ is available in closed form,
$$
\mu_k = \mathbb E(W_{ik}) = \sum_{c=1}^{C_k} \pi_{kc}\, \exp\!\big(\mu_{kc} + \tfrac12\sigma_{kc}^2\big).
$$ We vary $n \in \{100, 300, 1500, 10000, 100000, 500000\}$ and, for each $n$, repeat the data generation and estimation 50 times. 

Let $\widehat{\Phi} = (\widehat{\phi}_{kj})$ denote an estimate for $\Phi$ after permuting rows to minimize the total Euclidean distance between the true and estimated rows. We use normalized root mean squared error (NRMSE), and normalized Frobenius distance (NFD) as performance measures, defined in Table \ref{tab:error-metrics} in Supplementary Material \ref{subsec:sim}. 

As $n$, $J$, and $K$ grow, exact exhaustive search becomes computationally intractable. To evaluate the trade-off between scalability and accuracy, we compared five algorithmic variants: (1) exact exhaustive search, (2) exhaustive search over up to $200K$ directional extremes along 20000 randomly generated directions, (3-4) two pruned variants of algorithm (2) retaining the $10K$ and $15K$ candidates respectively, followed by greedy refinement, and (5) a purely greedy search initialized via the Automatic Target Generation Process \citep{plaza_impact_2006}. Algorithms (1)--(2) were feasible only for $n \in \{100, 300, 1500\}$.

\begin{figure}
    \centering
    \includegraphics[width=0.9\linewidth]{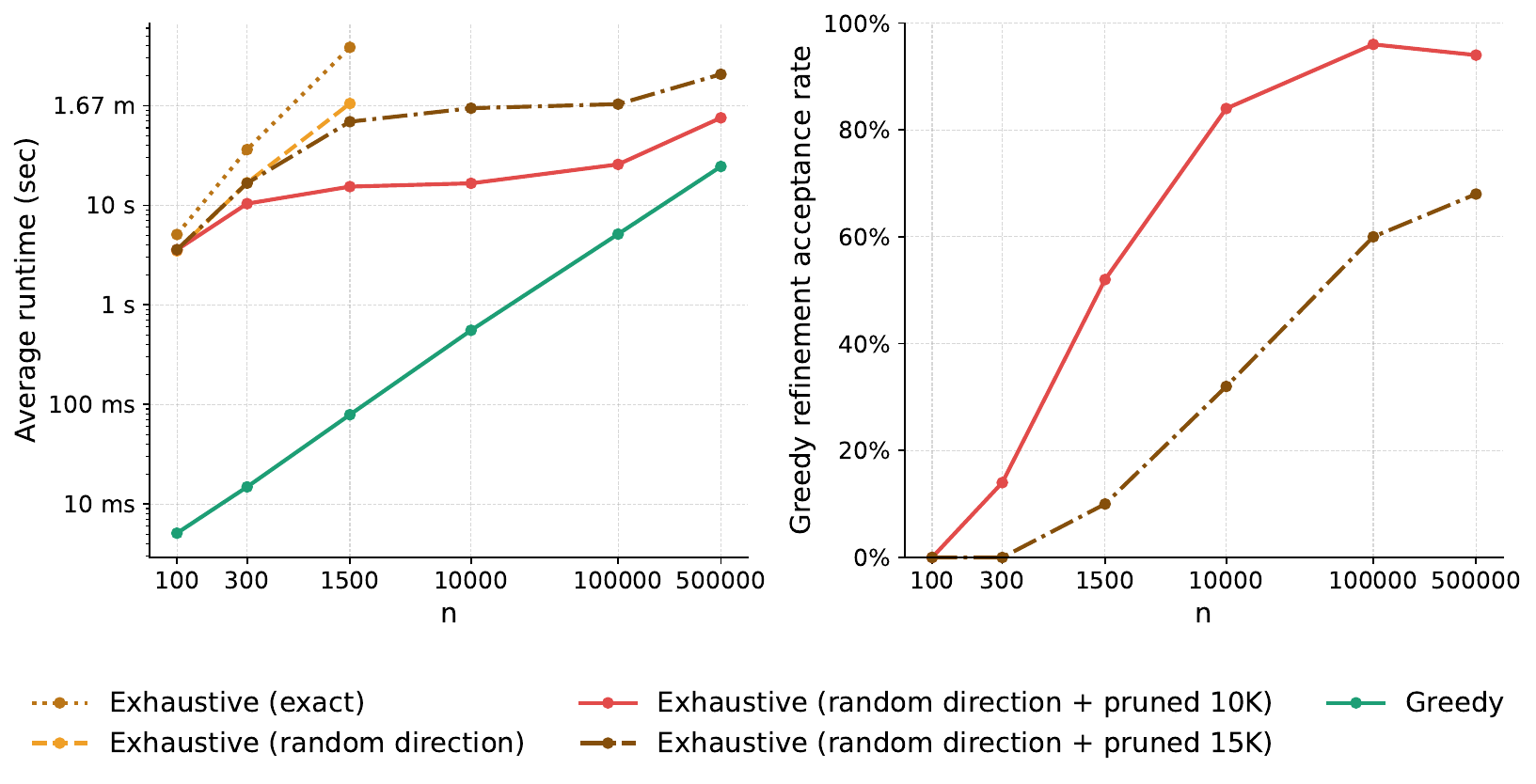}
    \caption{Average runtime per replicate in log scale (left; ms = milliseconds, s = seconds, m = minutes) and greedy refinement acceptance rate (right) across algorithmic variants and samples sizes $n$.}
    \label{fig:sim_iid_runtime}
\end{figure}

Figure \ref{fig:sim_iid_runtime} summarizes runtime and greedy refinement acceptance rates across sample sizes. The left panel shows that greedy search scales log-linearly in $n$, remaining the fastest option throughout. Exhaustive search variants, while expensive at small $n$, grow much more slowly once pruning and random-direction initialization are applied, making them viable at larger scales. The right panel presents the greedy refinement acceptance rate, measuring how often greedy steps improve upon the exhaustive solution's volume. The acceptance rate increases steadily with $n$, and is substantially higher under the more aggressive $10K$ pruning than under $15K$. Crucially, at small $n$ where exact search was feasible, all faster variants achieved nearly identical accuracy to the exact solution, and at large $n$ the two pruned variants performed indistinguishably from each other (not shown). Together, these results indicate that aggressive pruning can substantially reduce runtime with minimal loss in estimation accuracy.

Because all algorithmic variants yielded closely comparable accuracy, subsequent results are reported for the exhaustive search with random-direction initialization pruned to $10K$. Figure \ref{fig:sim_iid_nrmse} shows the NRMSE of $\widehat{H}^*$ and $\widehat{\Phi}$ across 50 replicates for each value of $n$ (NFD results, which lead to the same conclusions, are omitted). Both NRMSEs decrease monotonically with $n$, and the rate of improvement is slightly faster for $\widehat{\Phi}$ than for $\widehat{H}^*$, reflecting the additional benefit of improved estimation of $\widetilde{W}$ at larger sample sizes. These findings are consistent with the theory of asymptotic convergence of $\widehat{H}^*$, $\widetilde{W}$, and $\widehat{\Phi}$ in the iid $W$ setting.

\begin{figure}
    \centering
    \includegraphics[width=0.65\linewidth]{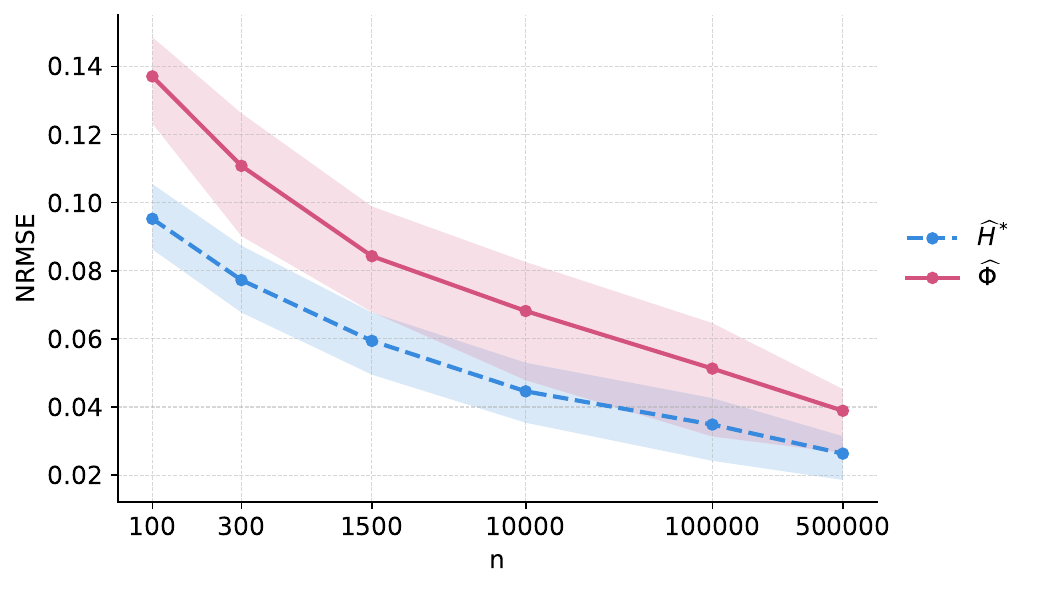}
    \caption{NRMSE of $\widehat H$ (blue dashed) and $\widehat \Phi$ (pink solid) as a function of $n$ based on 50 replicates in the iid $W$ setting, obtained via exhaustive search with random-direction initialization pruned to $10K$. Lines show the average and shaded bands the interquartile range.}
    \label{fig:sim_iid_nrmse}
\end{figure}

Figure \ref{fig:sim_iid_scatter} in Supplementary Material \ref{subsec:sim} presents scatter plots of all $K \times J = 50$ entries of $\widehat{\Phi}$ against their true values, pooled over 50 replicates, for each sample size. At small $n$ the estimates are widely dispersed around the 45-degree line, but the scatter concentrates tightly along it as $n$ increases. A mild regression-to-the-mean bias (slopes below one) is visible particularly at small $n$; this is expected given that $\Phi$ lies in $[0,1]$ with columns constrained to sum to one.

Figure \ref{fig:sim_iidW_H} displays $\widehat{H}^*$ alongside the true $H^*$ overlaid on a pairwise scatter plot of a randomly selected pair of variables from $Y^*$, for randomly selected replicates at $n \in \{300, 100000, 500000\}$. As $n$ increases, $\widehat{H}^*$ converges visually to $H^*$. Some true vertices appear interior to the 2D projection despite being extreme in the full $J$-dimensional space. The estimator operates on the full-dimensional cloud and correctly recovers these vertices nonetheless.

\begin{figure}
    \centering
    \includegraphics[width=0.32\linewidth]{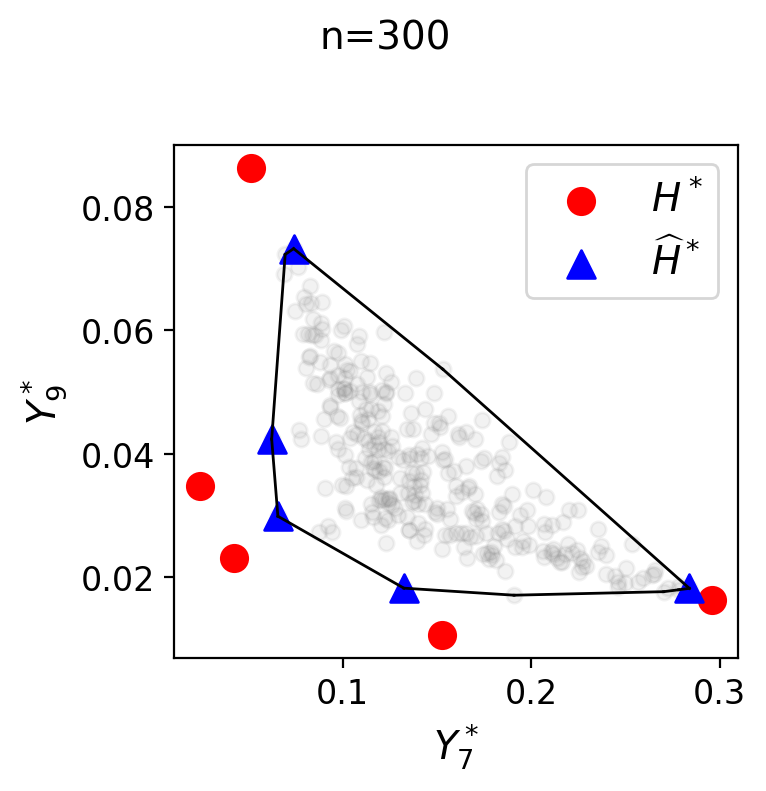}
    \includegraphics[width=0.32\linewidth]{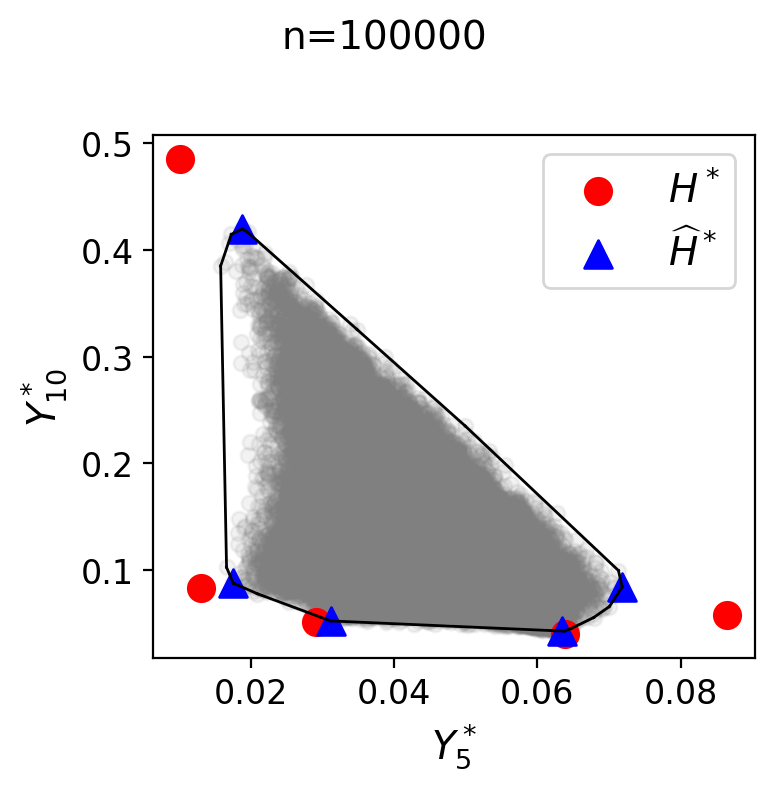}
    \includegraphics[width=0.32\linewidth]{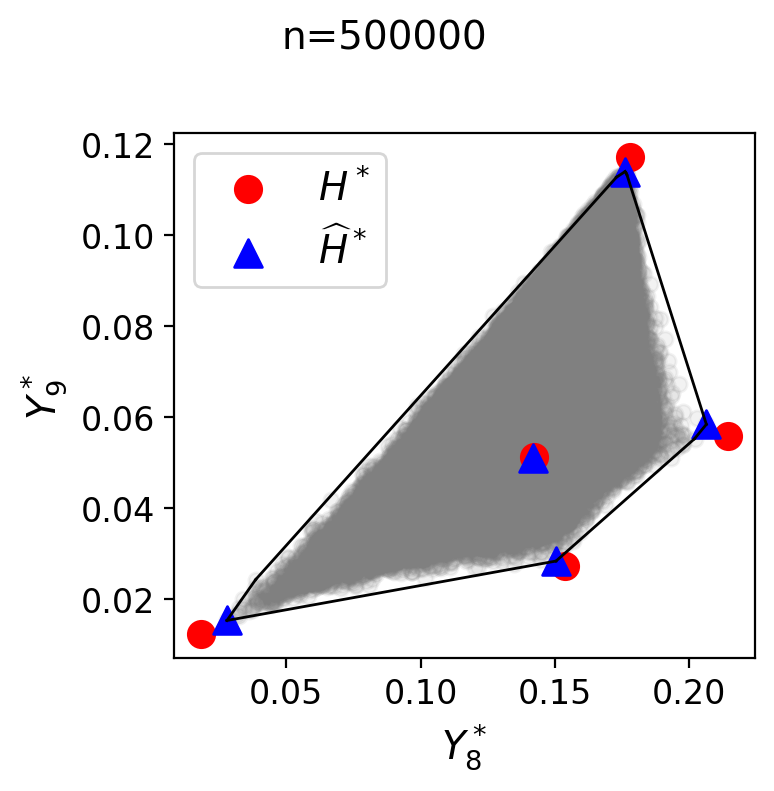}
    \caption{Scatter plot of a randomly selected pair of variables from $Y^*$ with the sample convex hull, overlaid with the true $H^*$ (red circles) and the estimated $\widehat{H}^*$ (blue triangles), for randomly selected replicates at $n = 300$ (left), $n = 100000$ (middle), and $n = 500000$ (right).}
    \label{fig:sim_iidW_H}
\end{figure}

\subsection{Stationary ergodic process $W$}
\label{sec:sim_ar1W}

The next set of simulations demonstrates robustness of the consistency result under a more complex data-generating process. The emissions $W$ follow a stationary ergodic process, and the observations are subject to multiplicative measurement errors, constituting a misspecification of \eqref{eq:main}. Latent data are drawn from an NMF model $\widehat{Y} = WH$, and observed data are generated via $Y_{ij} = \epsilon_{ij}\widehat{Y}_{ij}$, where the measurement error satisfies $\log \epsilon_{ij} = \log a_i + \log b_j + \tau\eta_{ij}$. The parameter $\tau$ governs the degree of non-separability: when $\tau = 0$, the model reduces to a purely separable log-measurement error structure. We draw $u_i := \log a_i \sim N(-\sigma_u^2/2,\, \sigma_u^2)$, ensuring $\mathbb{E}(a_i) = 1$, and independently set $v_j := \log b_j \sim N(0, \sigma_v^2)$ and $\eta_{ij} \sim N(0, 1)$. Throughout, we fix $\sigma_u = \sigma_v = 0.3$ and vary $\tau \in \{0, 0.1, 0.3\}$ to assess sensitivity to departures from separability.

We generate $W$ from a log-AR process of order one (log-AR(1)). For each $k\in\{1,\dots,K\}$, we simulate a stationary Gaussian AR(1) on $g_{ik}=\log W_{ik}$:
$$
g_{ik} = \mu^{(g)}_k + \phi_k\big(g_{i-1,k}-\mu^{(g)}_k\big) + \varepsilon_{ik}, 
\qquad \varepsilon_{ik}\sim\mathcal N(0,\sigma_{\varepsilon,k}^2),\quad |\phi_k|<1,
$$
initialized at stationarity $g_{1k}\sim \calN \!\big(\mu^{(g)}_k,\ \sigma_{\varepsilon,k}^2/(1-\phi_k^2)\big)$. Exponentiation yields non-negative factors $W_{ik}=\exp(g_{ik})$ with stationary lognormal marginals. The corresponding population mean is 
$$
\mu_k = \mathbb E(W_{ik}) = \exp\!\Big(\mu^{(g)}_k + \tfrac12\,\tfrac{\sigma_{\varepsilon,k}^2}{1-\phi_k^2}\Big).
$$ The autoregressive coefficient is fixed at $\phi_k = 0.8$ for all $k$, imposing strong temporal correlation in the data. The location and scale parameters of the latent Gaussian innovations are randomly drawn as $\mu^{(g)}_k \sim \mathrm{Unif}(-0.5, 0.5)$ and $\sigma_{\varepsilon,k} \sim \mathrm{Unif}(0.15, 0.5)$, respectively. All other settings mirror Section \ref{sec:sim_iidW}.

For $n \in \{100, 300\}$, we use exact exhaustive search. For $n \in \{1500, 10000\}$, we prune hull candidates to approximately $50K$ and apply greedy refinement. For $n \in \{100000, 500000\}$, we first extract up to $500K$ directional extremes along 20000 randomly generated directions, prune these to approximately $50K$ candidates, and then apply greedy refinement. The pruned candidate sets are larger than in Section \ref{sec:sim_iidW} because $K=3$ keeps the problem tractable even at this scale. 

Figure \ref{fig:sim_ar1W_nfd} summarizes NFD across replicates for varying $n$ and $\tau$. Under fully separable or mildly nonseparable errors ($\tau = 0, 0.1$), estimation error for $\widehat{\Phi}$ decreases monotonically with $n$, consistent with our consistency result. Under moderate nonseparability ($\tau = 0.3$), the NFD of $\widehat{H}^*$ increases with $n$, suggesting that the multiplicative errors distort the data cloud and increasingly obscure the true vertices. The NFD of $\widehat{\Phi}$, however, plateaus, indicating that the source attribution estimates remain stable despite the misspecification. NRMSE results are qualitatively similar and are omitted.

\begin{figure}
    \centering
    \includegraphics[width=0.98\linewidth]{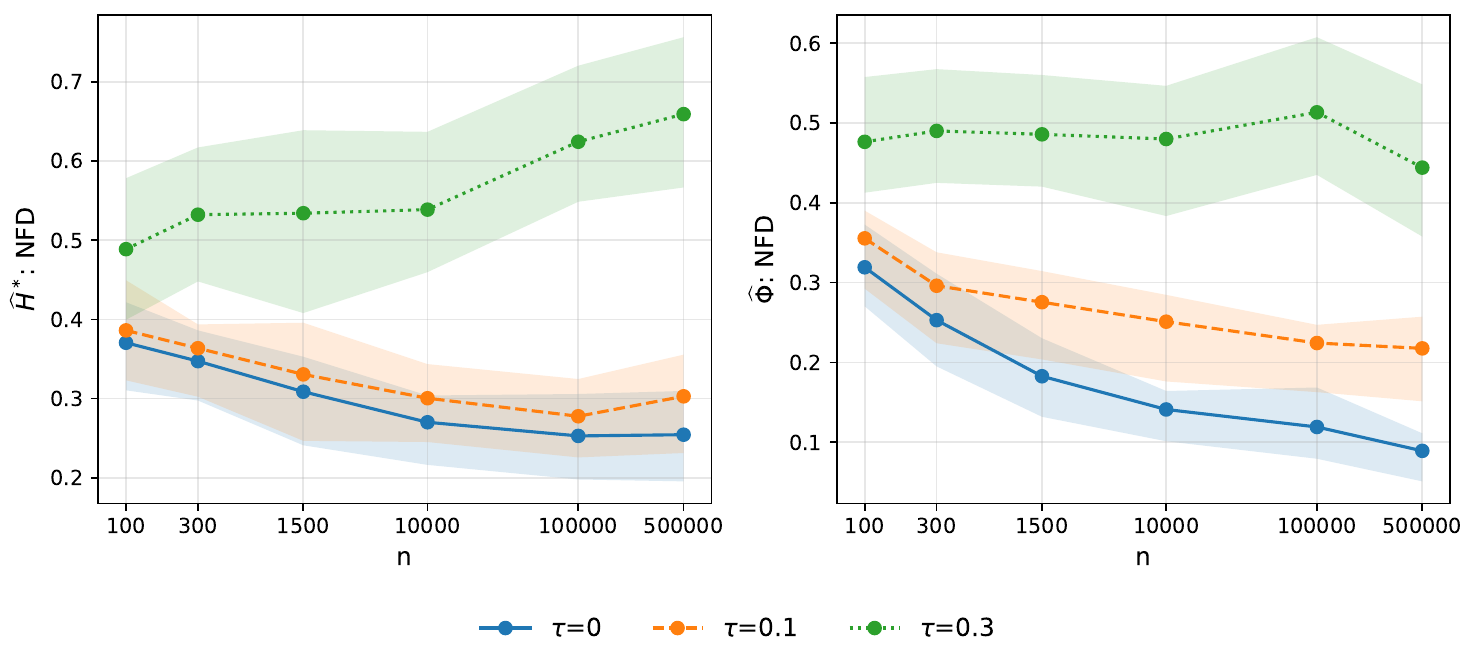}
    \caption{NFD of $\widehat{H}$ and $\widehat{\Phi}$ as a function of $n$, for $\tau \in \{0, 0.1, 0.3\}$, based on 50 replicates under the stationary ergodic $W$ setting. Estimates are obtained via exhaustive search. Lines show the mean and shaded bands the interquartile range.}
    \label{fig:sim_ar1W_nfd}
\end{figure} 

Figure \ref{fig:sim_ar1W_heat} in Supplementary Material \ref{subsec:sim} compares true $\Phi$ with $\widehat{\Phi}$ for randomly selected replicates at $n \in \{300, 100000, 500000\}$ under the fully separable setting ($\tau = 0$). The visual agreement between the two heatmaps improves with $n$, illustrating the convergence suggested by Figure \ref{fig:sim_ar1W_nfd}.

\section{Discussion} \label{sec:disc}

In this manuscript, we synthesized probabilistic and geometric approaches to establish identification and consistent estimation results for source apportionment under relaxed assumptions. We introduced stationary ergodic sampling of observations and replaced the deterministic geometric separability constraint with a soft probabilistic analog, both of which are more realistic for environmental processes. Rather than interpreting the non-unique NMF factors directly, we defined a population-level estimand for the proportion of each pollutant's concentration attributable to each source and showed that this quantity is scale-invariant, robust to separable multiplicative measurement error, statistically identifiable, and consistently estimable via a simple algorithm, GeomNMF.

GeomNMF is well-suited to modern source apportionment settings where large, high-dimensional datasets are available, such as those arising from air sensor networks. Air sensor networks typically measure a handful of non-speciated pollutants, sometimes in incompatible units (e.g., gases in ppb and particulate matter in $\mu g/m^3$), at high spatio-temporal resolution. The large sample sizes afforded by such networks are ideal for deploying GeomNMF, as our consistency results show. \cite{jin_source_2025} demonstrated this in Curtis Bay, Baltimore, applying GeomNMF to approximately half a million samples.

Our identification and estimation results rely on probabilistic separability. An alternative is the probabilistic version of the sufficiently scattered assumption. A similar identification result may be derivable under this assumption; however, the sample convex hull approach no longer applies, and one would instead need to find the minimum volume enclosing $K$-simplex for the point cloud. While a consistency result may be obtainable for the global solution, practical implementation is more challenging, as minimizing volume requires searching over a continuous space rather than a discrete set of candidate vertices as in GeomNMF.

Several directions merit further investigation. Extending the framework to handle missing data, outliers, and model misspecification such as nonstationarity are important practical priorities. Looking ahead, Bayesian formulations that enable data-adaptive selection of $K$, propagate uncertainty in $\Phi$ to health and policy metrics, and incorporate priors encoding geometric structure would be valuable, alongside development of more computationally efficient algorithms.

\section*{Acknowledgement} 
This work is partially supported by National Institute of Environmental Health Sciences (NIEHS) grant R01ES033739. During the preparation of this work the authors used chatGPT and Claude for helping with literature review, and improving the clarity and exposition of some of the proofs. After using this tool/service the authors reviewed and edited the content as necessary and take full responsibility for the content of the publication.

% supplementary material 
\newpage
\renewcommand\thesection{S\arabic{section}}
\renewcommand\theequation{S\arabic{equation}}
\renewcommand\thelemma{S\arabic{lemma}}

\renewcommand\thefigure{S\arabic{figure}}
\renewcommand\thetable{S\arabic{table}}
\setcounter{figure}{0}
\setcounter{section}{0}
\setcounter{equation}{0}
\setcounter{table}{0}
\setcounter{lemma}{0}

\ifbiometrika
\else
\begin{center}
    \Large \title{\textbf{Supplementary materials for \\``\papertitle''}}\\
\author{Bora Jin, Abhirup Datta\\
Department of Biostatistics, Johns Hopkins University}
\end{center}
\fi

\section{Proofs}\label{sec:pf}

Before proving Theorem \ref{thm:welldefn}, we state a standard result for stationary ergodic processes. The proof included here for completeness.

\begin{lemma}\label{lem:erg}
Let \(\{X_t\}_{t \ge 1}\) be a stationary ergodic process on a separable metric space \((\mathsf{X},d)\) with support set \(S\), i.e.,  for every \(a\in S\) and every ball \(\epsilon>0\) one has \(P\!\left(X_1\in B_\epsilon(a)\right)>0\) for a ball $B_\epsilon(a)$ around $a$. Then for any \(a\in S\) there exists a random subsequence \(\{t_k\}_{k\ge1}\) such that \(X_{t_k}\to a\) almost surely.
\end{lemma}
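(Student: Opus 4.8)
The plan is to build the subsequence greedily along a shrinking sequence of radii, using ergodicity to guarantee that each relevant "return to a small ball" event happens infinitely often, and then extract a diagonal-type subsequence that converges to $a$. Fix $a \in S$. For each $m \geq 1$ set $\epsilon_m = 1/m$ and consider the event $A_m = \{X_1 \in B_{\epsilon_m}(a)\}$, which by the support hypothesis has $P(A_m) > 0$. The key observation is that for a stationary ergodic process, any event of positive probability defined on finitely many coordinates (here just the first) occurs along infinitely many times almost surely: applying the pointwise ergodic theorem to the indicator $f_m(\omega) = \mathbf{1}\{X_1(\omega) \in B_{\epsilon_m}(a)\}$ and its shifts gives $\frac{1}{N}\sum_{t=1}^N \mathbf{1}\{X_t \in B_{\epsilon_m}(a)\} \to P(A_m) > 0$ almost surely, so in particular $\{t : X_t \in B_{\epsilon_m}(a)\}$ is almost surely infinite.

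Next I would intersect over $m$: let $\Omega_0 = \bigcap_{m \geq 1} \{\omega : \{t : X_t(\omega) \in B_{\epsilon_m}(a)\} \text{ is infinite}\}$, a countable intersection of probability-one events, hence $P(\Omega_0) = 1$. On $\Omega_0$, I construct $\{t_k\}$ recursively: having chosen $t_1 < \cdots < t_{k-1}$, pick $t_k > t_{k-1}$ with $X_{t_k} \in B_{\epsilon_k}(a)$, which is possible because the set of such times is infinite. Then $d(X_{t_k}, a) < 1/k \to 0$, so $X_{t_k} \to a$ on $\Omega_0$, which is the claim. The subsequence is random because it depends on $\omega$ through the realized return times, exactly as allowed by the statement.

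The only step requiring any care is the invocation of the ergodic theorem to pass from "$A_m$ has positive probability" to "$A_m$ occurs infinitely often almost surely"; this is the main (though standard) point, and the separability of $(\mathsf{X}, d)$ is what makes the balls $B_{\epsilon_m}(a)$ measurable and ensures the countable intersection defining $\Omega_0$ is legitimate. Everything else is the routine recursive extraction. An alternative to the ergodic theorem here is the Poincaré recurrence theorem together with ergodicity, but Birkhoff's theorem gives the cleanest route and is what I would use.
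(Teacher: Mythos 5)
Your proposal is correct and follows essentially the same route as the paper's proof: apply Birkhoff's ergodic theorem to the indicators of shrinking balls $B_{1/m}(a)$ to conclude each return set is almost surely infinite, then recursively extract $t_k$ with $X_{t_k}\in B_{1/k}(a)$. Your explicit countable intersection defining $\Omega_0$ is a minor tidiness improvement over the paper's phrasing, but the argument is the same.
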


\begin{proof}
Fix \(a\in S\). For \(n\ge1\) set \(A_n=B_{1/n}(a)\) and define \(I_t^{(n)}=I\{X_t\in A_n\}\), where $I\{\cdot\}$ denotes the indicator function. By stationarity, \(\mathbb E(I_t^{(n)})=P(X_1\in A_n)>0\) for each \(n\). By Birkhoff's ergodic theorem,
\[
\frac{1}{N}\sum_{t=1}^N I_t^{(n)} \to_{a.s.} P(X_1\in A_n)..
\]
Hence for each \(n\) the set \(\{t \in \mathbb N:X_t\in A_n\}\) is infinite with probability one. Construct inductively \(t_1<t_2<\cdots\) by choosing \(t_k\) so that \(X_{t_k}\in A_k\). Then \(d(X_{t_k},a)\le 1/k\) for all \(k\) with probability one, which implies \(X_{t_k}\to_{a.s.}a\).
\end{proof}

\begin{proof}[\ifbiometrika \else Proof \fi of Theorem \ref{thm:welldefn}] For notational simplicity we drop tildes and write $\widetilde W_i = Z_i$, $\widetilde H = G$, $\widetilde \mu = \eta$, and $\widetilde \Phi = \Psi$. Let $g_k^\top$ denote the $k$th row of $G$. 

Fix $k \in \{1,\ldots,K\}$. Because $c_k e_k \in$ $\mathbb S_Z$ and $\{Z_i\}_{i\ge 1}$ is stationary and ergodic, Lemma \ref{lem:erg} provides a subsequence $\{Z_{i(m)}\}$ with $Z_{i(m)} \to_{a.s.} c_k e_{k}$ as $m \to \infty$. From $Y_{i(m)}^\top = Z_{i(m)}^\top G$, it follows that $Y_{i(m)}^\top \to_{a.s.} c_k g^\top_k$. Since $Y_i^\top = W_i^\top H$ as well, the sequence $W_{i(m)}^\top H$ converges to $c_k g^\top_k$, so $c_k g^\top_k$, and hence $g^\top_k$, lies in the cone generated by rows of $H$. This cone is closed, %cone generated by finitely many vectors is closed
so $g^\top_k = a_k^\top H$ for some $\alpha_k \in \mathbb R_+^K$. Letting $A$ be the $K\times K$ matrix stacking the row vectors $\alpha_k^\top$ gives
\[
G = A H, \qquad A \ge 0 \mbox{ (entrywise non-negative) }.
\]
By symmetry (interchanging $(W,H)$ and $(Z,G)$), there exists a $K \times K$ matrix $B \ge 0$ with $H = B G.$
Substituting yields  
\[
H = B G = B A H. 
\]
If $H$ has full row rank, then $BA=I_K$, so $A$ is invertible with non-negative inverse $A^{-1}=B$. A non-negative matrix with a non-negative inverse must be monomial \citep{berman_chapter_1979}, i.e. $A = P D$ with a permutation matrix $P$ and a positive diagonal matrix $D$. Therefore, $G = A H = P D H$. After reordering rows (permuting sources) we may take $P=I_K$, giving $G = D H$ and thus $g_k = d_kh_k$. Each row of $G$ coincides with a row of $H$ up to a positive scalar. 

Taking expectations of $Y_i^\top = W_i^\top H = Z_i^\top G = Z_i^\top D H$ gives $\mathbb E(W_i^\top) H = \mathbb E(Z_i^\top) D H$. Since $H$ has full row rank, this implies $\mathbb E(W_i^\top) = \mathbb E(Z_i^\top) D$, so $\mu_k = d_k\eta_k$. Finally,
\[\psi_{kj}=\frac {\eta_k G_{kj}}{\sum_{\ell=1}^K \eta_{\ell} G_{\ell j}} = \frac {(\mu_k/d_k) (d_k H_{kj})}{\sum_{\ell=1}^K (\mu_\ell/d_\ell) (d_\ell H_{\ell j})}  = \phi_{kj},\] establishing $\Psi = \Phi$. 
\end{proof}

\begin{proof}[\ifbiometrika \else Proof \fi of Corollary \ref{cor:logerr}]
Equation \eqref{eq:two_rep_logerr} implies
\[
Y_i^\top=(W_i')^\top H' = (\widetilde W_i')^\top \widetilde H'.
\]
Since $\{a_i\}$ and $\{\tilde a_i\}$ are stationary and ergodic, independently of $\{W_i\}$ and of $\{\widetilde W_i\}$, respectively, the processes $\{W_i'\}$ and $\{\widetilde W_i'\}$ inherit stationarity and ergodicity. Furthermore, since each $a_i$ or $\tilde a_i$ is a positive scalar, the processes $\{W_i'\}$ and $\{\widetilde W_i'\}$ also inherit probabilistic separability from $\{W_i\}$ and $\{\widetilde W_i\}$. Their finite means satisfy
\[
\mu':=\mathbb E(W_i')=\mathbb E(a_i W_i)=\mathbb E(a_i)\mathbb E(W_i)=\mu,\qquad
\tilde\mu':=\mathbb E(\widetilde W_i')=\mathbb E(\tilde a_i \widetilde W_i)=\mathbb E(\tilde a_i)\mathbb E(\widetilde W_i)=\tilde\mu,
\] 
by the mean and independence assumptions on $\{a_i\}$ and $\{\tilde a_i\}$. Since $D_b$ and $D_{\tilde b}$ are diagonal matrices with strictly positive entries, 
$\mathrm{rank}(H')=\mathrm{rank}(H)$ and $\mathrm{rank}(\widetilde H')=\mathrm{rank}(\widetilde H)$, hence, at least one of $H'$ or $\widetilde H'$ has full row rank.
Applying Theorem~\ref{thm:welldefn} to the two pairs $(W_i', H')$ and $(\widetilde W_i', \widetilde H')$ yields $\Phi'=\widetilde\Phi'$ up to a permutation of rows.

Finally, for each $k,j$,
\[
\phi'_{kj}
=\frac{\mu'_k H'_{kj}}{\sum_{\ell=1}^K \mu'_\ell H'_{\ell j}}
=\frac{\mu_k (H_{kj}b_j)}{\sum_{\ell=1}^K \mu_\ell (H_{\ell j}b_j)}
=\frac{\mu_k H_{kj}}{\sum_{\ell=1}^K \mu_\ell H_{\ell j}}
=\phi_{kj},
\]
and similarly $\widetilde\phi'_{kj}=\widetilde\phi_{kj}$, which proves $\Phi'=\Phi$.
\end{proof}

\begin{proof}[\ifbiometrika \else Proof \fi of Theorem \ref{thm:hull}]
Let $\calH^*=\{h_1^*,\dots,h_K^*\}$ and let $\mathbb{S}_{W^*}$ denote the support of $\mathbb{P}_{W^*}$. Since $W_i^*\in\Delta^{K-1}$ and $Y_i^{*\top}=W_i^{*\top}H^*$, $Y_i^* \in \conv(\calH^*)$ and hence
\[
S_n:=\conv\{Y_1^*,\dots,Y_n^*\}\subset\conv(\calH^*).
\]
Since $S_n \subset \conv(\calH^*)$, the Hausdorff distance reduces to the directed distance
\[
d_{\rm Haus}\bigl(S_n,\conv(\calH^*)\bigr)=\sup_{y\in\conv(\calH^*)}\operatorname{dist}(y,S_n).
\]

\textit{Step 1: $\mathbb{D}_{W^*}$ contains each vertex $e_k$.} Fix $\epsilon > 0$. By Assumption~\ref{asm:sep}, for each $k$ there exists $c_k > 0$ such that $\mathbb{P}_W(B(c_k e_k, \epsilon')) > 0$ for any $\epsilon' > 0$ where $B(c_k e_k,\epsilon')$ is the open ball of radius $\epsilon'$ centered at $c_k e_k$. Choose
\[
\epsilon' < \min\left\{\frac{c_k}{2\sqrt{K}},\, \frac{\epsilon c_k}{2(1+\sqrt{K})}\right\}.
\]
For any $w \in \mathbb{R}_+^K$ with $\|w - c_k e_k\| \le \epsilon'$, the Cauchy-Schwarz inequality gives
\[
|1_K^\top w - c_k| = |1_K^\top(w - c_k e_k)| \le \sqrt{K}\epsilon',
\]
so $1_K^\top w \ge c_k - \sqrt{K}\epsilon' \ge c_k/2$, and hence $1/(1_K^\top w) \le 2/c_k$. Setting $w^* = w/(1_K^\top w) \in \Delta^{K-1}$ and applying the triangle inequality,
\begin{align*}
\|w^* - e_k\| &= \left\|\frac{w}{1_K^\top w} - \frac{c_ke_k}{1_K^\top w} + \frac{c_ke_k}{1_K^\top w} - \frac{c_k e_k}{c_k}\right\| \\
    &\le \left\|\frac{w}{1_K^\top w} - \frac{c_ke_k}{1_K^\top w} \right\| + \left\|\left(\frac{c_k}{1_K^\top w} - 1\right)e_k\right\| = \frac{\|w - c_k e_k\|}{1_K^\top w} + \frac{|1_K^\top w - c_k|}{1_K^\top w} \\
    & \le \frac{2\epsilon'}{c_k} + \frac{2\sqrt{K}\epsilon'}{c_k} = \frac{2(1+\sqrt{K})\epsilon'}{c_k} \\
    &\le \epsilon.
\end{align*}
By continuity of the normalization map $w \mapsto w/(1_K^\top w)$ and positivity of $\mathbb{P}_W(B(c_ke_k, \epsilon'))$, it follows that $\mathbb{P}_{W^*}(B(e_k, \epsilon)) > 0$. Since $\epsilon > 0$ was arbitrary, $e_k \in \mathbb{S}_{W^*}$ for every $k$.

\textit{Step 2: The sample convex hull approximates $\conv(\calH^*)$.} Fix $\varepsilon > 0$ and set $\delta = \varepsilon/\|H^*\|_{\mathrm{op}}$, which is well-defined and positive since $H^* = D_h^{-1}H$ has full row rank ($\mathrm{rank}(H) = \mathrm{rank}(H^*)$ because $D_h$ is a diagonal matrix with positive entries). For any $y \in \conv(\calH^*)$, write $y^\top = w^{*\top}H^*$ with $w^* \in \Delta^{K-1}$. Since each $e_k \in \mathbb{S}_{W^*}$, we have $p_k := \mathbb{P}_{W^*}(B(e_k,\delta)) > 0$. By Birkhoff's ergodic theorem,
\[
\frac{1}{n}\sum_{i=1}^n I\{W_i^* \in B(e_k,\delta)\} \to_{a.s.} p_k,
\]
so each ball $B(e_k,\delta)$ is visited infinitely often almost surely. In particular, for each $k$, the index $i_k(n) := \max\{i \le n : W_i^* \in B(e_k,\delta)\}$ is well-defined for all $n \ge N(\varepsilon)$ almost surely, with $W^*_{i_k(n)} \in B(e_k,\delta)$. Define the convex combination
\[
\widehat{y}_n(y) := \sum_{k=1}^K w^*_k Y^*_{i_k(n)} \in S_n.
\]
Since $y^\top = \sum_{k=1}^K w^*_k e_k^\top H^*$ and $Y^{*\top}_{i_k(n)} = W^{*\top}_{i_k(n)}H^*$, for any $n \ge N(\varepsilon)$,
\begin{align*}
\|y - \widehat{y}_n(y)\| &\le \sum_{k=1}^K w^*_k \|e_k^\top H^* - W^{*\top}_{i_k(n)}H^*\| \\
&\le \sum_{k=1}^K w^*_k \|e_k - W^*_{i_k(n)}\|\,\|H^*\|_{\mathrm{op}} \\
&\le \delta\|H^*\|_{\mathrm{op}}\sum_{k=1}^K w^*_k = \delta\|H^*\|_{\mathrm{op}} = \varepsilon.
\end{align*}
Hence $\operatorname{dist}(y, S_n) \le \|y - \widehat{y}_n(y)\| < \varepsilon$ for all $n \ge N(\varepsilon)$, almost surely. % dist is infinum
Since $N(\varepsilon)$ depends only on $\|H^*\|_{\mathrm{op}}$ and $\delta$ through $\varepsilon$, none of which depend on $y$, this bound holds uniformly over $\conv(\calH^*)$. Therefore, for any $\varepsilon > 0$,
\[
\sup_{y\in\conv(\calH^*)}\operatorname{dist}(y,S_n) < \varepsilon \quad \text{almost surely for all } n \ge N(\varepsilon),
\]
and we conclude $d_{\mathrm{Haus}}(S_n,\conv(\calH^*)) \to_{a.s.} 0$.
\end{proof}

\begin{proof}[\ifbiometrika \else Proof \fi of Corollary \ref{cor:maxvol}]
For $V=(v_1,\ldots,v_K)^\top\in \R^{K\times J}$, define
\[
G(V)=\operatorname{vol}_{K-1}\big(\conv\{v_1,\ldots,v_K\}\big)=\frac{\sqrt{|\det(AA^\top)|}}{(K-1)!},
\]
where $A=(v_2-v_1, \dots, v_K-v_1)^\top\in \R^{(K-1)\times J}$. Since $A$ is a linear function of the rows of $V$, hence continuous in $V$, and $\det(AA^\top)$ is a continuous function of the entries of $A$, $G$ is continuous on $\R^{K\times J}$. 

For any $v_1,\ldots,v_K \in \conv(\calH^*)$, we have $\conv\{v_1,\ldots,v_K\}\subset \conv(\calH^*)$, so $G(V)\le \operatorname{vol}_{K-1}(\conv(\calH^*))$. Equality holds if and only if $\conv\{v_1,\ldots,v_K\}=\conv(\calH^*)$. Since $H^*$ has full row rank, its rows are affinely independent, and so $\conv(\calH^*)$ has exactly $K$ vertices. So, the only $K$-point subset of $\conv(\calH^*)$ whose convex hull equals $\conv(\calH^*)$ is $\{h_1^*,\ldots,h_K^*\}$ up to permutation. Thus the maximizers of $G$ over $\conv(\calH^*)^K$ are exactly the $K!$ permutations of $\calH^*$.

Since $S_n\to_{a.s.}\conv(\calH^*)$ in Hausdorff distance by Theorem~\ref{thm:hull}, for each $k=1,\dots,K$ there exist points $h_{n,k}\in S_n$ with $\|h_{n,k}-h_k^*\|\to_{a.s.} 0$. By continuity of $G$,
\[
G(h_{n,1},\ldots,h_{n,K})\to_{a.s.} G(h_1^*,\ldots,h_K^*)=\operatorname{vol}_{K-1}(\conv(\calH^*)).
\]
Let $\widehat \calH^*_n=\{\widehat{h}^*_{n,1},\ldots,\widehat{h}^*_{n,K}\}$ be a $K$-point subset of $S_n$ achieving maximum volume. By maximality,
\[
G(\widehat{h}^*_{n,1},\ldots,\widehat{h}^*_{n,K}) \ge G(h_{n,1},\ldots,h_{n,K}),
\]
and since $G$ cannot exceed $\operatorname{vol}_{K-1}(\conv(\calH^*))$, we conclude
\[
G(\widehat{h}^*_{n,1},\ldots,\widehat{h}^*_{n,K})\to_{a.s.} \operatorname{vol}_{K-1}(\conv(\calH^*)).
\]

It remains to show this implies $d_{\mathrm{Haus}}(\widehat \calH^*_n, \calH^*)\to_{a.s.} 0$. Since all points $\widehat{h}^*_{n,k}\in S_n\subset \conv(\calH^*)$ lie in a fixed compact set $\conv(\calH^*)$, every subsequence of $\{(\widehat{h}^*_{n,1},\ldots,\widehat{h}^*_{n,K})\}$ has a further subsequence converging to some $(v_1,\ldots,v_K)\in\conv(\calH^*)^K$. By continuity of $G$, the limit satisfies $G(v_1,\ldots,v_K)=\operatorname{vol}_{K-1}(\conv(\calH^*))$, so $\{v_1,\ldots,v_K\}=\{h_1^*,\ldots,h_K^*\}$ by the maximizer characterization above. Since every subsequence has a further subsequence converging to $\calH^*$, the full sequence satisfies
\[
d_{\mathrm{Haus}}(\widehat \calH^*_n,\ \calH^*)\to_{a.s.} 0. \qedhere
\] % a sequence converges to a limit if and only if every subsequence has a further subsequence converging to that same limit.
\end{proof}

\begin{proof}[\ifbiometrika \else Proof \fi of Corollary \ref{cor:muhat-consistent}]
By stationarity and ergodicity, $(\bar Y_n, \bar r_n) \to_{a.s.} (\mathbb{E}(Y_1), \mathbb{E}(r_1))$, and thus $(\bar Y_n, \bar r_n) \to_{p} (\mathbb{E}(Y_1), \mathbb{E}(r_1))$. Note that matrix multiplication is continuous. Since $H^*$ has full row rank, $H^*_{\mathrm{aug}}$ has also full row rank, and the Moore-Penrose inverse is continuous in a neighborhood of $H^*_{\mathrm{aug}}$. Therefore, by the continuous mapping theorem, $\widehat H^* \to_p H^*$ implies $R(\widehat H^*) \to_p R(H^*)$, which in turn gives
\[
  \widetilde m_n^\top = \bigl[\,\bar Y_n^\top\ \ \bar r_n\,\bigr]\,R(\widehat H^*)
  \;\to_p\;
  \bigl[\,\mathbb{E}(Y_1^\top)\ \ \mathbb{E}(r_1)\,\bigr]\,R(H^*) = \widetilde\mu^\top.
\]
The almost-sure version follows by replacing $\widehat H^* \to_p H^*$ with 
$\widehat H^* \to_{a.s.} H^*$ throughout.
\end{proof}

% note: 
% On the left-hand side, we define $R(\widehat H^*)$ using the Moore--Penrose pseudoinverse $(\cdot)^+$, so that it is well-defined even if $\widehat H^*\widehat H^{*\top}$ is rank-deficient. This occasional rank deficiency is asymptotically irrelevant: since $\widehat H^* \xrightarrow{P} H^*$, we have $\Pr(\|\widehat H^* - H^*\| < \delta)\to 1$ for any fixed $\delta>0$. If $H^*$ has full row rank, then its smallest singular value satisfies $\sigma_{\min}(H^*)>0$, and hence there exists $\delta>0$ such that $\|\widehat H^* - H^*\|<\delta$ implies $\widehat H^*$ is also full row rank. Therefore, with probability tending to one, $(\widehat H^*\widehat H^{*\top})^+ = (\widehat H^*\widehat H^{*\top})^{-1}$, and the use of $(\cdot)^+$ on the left-hand side is compatible with the inverse on the right-hand side $(\cdot)^{-1}$.

\begin{proof}[\ifbiometrika \else Proof \fi of Proposition \ref{prop:wtildeplugin}]
Fix $i$. Since $Y_i^\top = \widetilde{W}_i^\top H^*$ and each row of $H^*$ sums to one, 
\[
r_i = Y_i^\top 1_J = \widetilde{W}_i^\top H^* 1_J = \widetilde{W}_i^\top 1_K,
\]
and hence
\[
\bigl[\,Y_i^\top\ \ r_i\,\bigr] = \widetilde{W}_i^\top H^*_{\mathrm{aug}}.
\]
Since $H^*_{\mathrm{aug}}$ has full row rank, $H^*_{\mathrm{aug}} R(H^*) = I_K$, and it follows that
\[
\bigl[\,Y_i^\top\ \ r_i\,\bigr] R(H^*) = \widetilde{W}_i^\top H^*_{\mathrm{aug}} R(H^*) = \widetilde{W}_i^\top.
\]
Define $\widehat{\widetilde{W}}_i^\top = \bigl[\,Y_i^\top\ \ r_i\,\bigr] R(\widehat{H}^*)$. As shown in the proof of Corollary \ref{cor:muhat-consistent} shows that $\widehat H^* \to_p H^*$ implies $R(\widehat H^*) \to_p R(H^*)$. Since $\bigl[\,Y_i^\top\ \ r_i\,\bigr]$ is fixed for the given $i$ and matrix multiplication is continuous, the continuous mapping theorem gives
\[
\widehat{\widetilde{W}}_i^\top = \bigl[\,Y_i^\top\ \ r_i\,\bigr] R(\widehat{H}^*) \to_p \bigl[\,Y_i^\top\ \ r_i\,\bigr] R(H^*) = \widetilde{W}_i^\top.
\]
The almost-sure version follows by replacing $\to_p$ with $\to_{a.s.}$ throughout.
\end{proof}

\newpage

\section{Detailed algorithm}\label{sec:algodetails}
We present the detailed GeomNMF algorithm here. 

\ifbiometrika
\begin{algorithm}[!b]
\caption{\textsc{GeomNMF: Source attribution matrix estimation}}
\label{alg:sourcexray}
\DontPrintSemicolon
\SetKwInOut{Input}{Input}\SetKwInOut{Output}{Output}
\Input{$Y\in\mathbb{R}_+^{n\times J}$; number of sources $K<J$; optional random direction search flag; optional pruning flag; optional greedy refinement flag (requires at least one of: direction search, pruning); tolerances $\varepsilon$.}
\Output{$\widehat{\Phi}$}

\textbf{Row normalization:} set $r\gets Y\mathbf 1_J$ and $Y^*\gets \big(\mathrm{diag}(r)\big)^{-1}Y$.\;

\textbf{Intrinsic projection:} let $Y_{\mathrm{red}}$ be the first $J{-}1$ columns of $Y^*$; 
$m_{\mathrm{red}}\gets \tfrac{1}{n}\mathbf 1_n^\top Y_{\mathrm{red}}$; 
$Y_c\gets Y_{\mathrm{red}}-\mathbf 1_n m_{\mathrm{red}}^\top$; 
compute thin SVD $Y_c=U\Sigma V^\top$; choose $B\gets V_{[:,1:r_B]}$ and set $Z\gets Y_c B$.\;

\textbf{Hull vertices:} compute the convex hull of $Z$; let $\mathcal I_{\mathrm{hull}}$ be its vertex indices; $H^{*}_{\mathrm{cand}}\gets Y^*[\mathcal I_{\mathrm{hull}},:]$. \;
\tcp*{Option (Directional extremes): scale $Z$ to $Z_{\mathrm{w}}$ to have identity covariance. Sample $T$ random unit directions $\{u_t\}_{t=1}^T$; for each $u_t$, select the top $k$ observations with largest values of $Z_{\mathrm{w}}u_t$ and the top $k$ with smallest values. Let $\mathcal I_{\mathrm{dir}}$ be the resulting unique candidate indices, ordered by how often they are selected. Set $H^*_{\mathrm{cand}} \gets Y^*[\mathcal I_{\mathrm{dir}}, :]$.} \;

\textbf{Option (Pruning):} cluster $H^{*}_{\mathrm{cand}}$ (e.g., mini-batch $k$-means) and keep one representative per cluster; update $H^{*}_{\mathrm{cand}}$.\;

\textbf{Max-volume selection (global search):} 
for each $K$-subset $V\subset H^{*}_{\mathrm{cand}}$, compute the intrinsic $(K{-}1)$-volume $\log G(V)$; choose $V^*=\arg\max_{V}\log G(V)$ and set $\widehat H^*\gets V^*$. \;

\textbf{Option (Greedy search):} \textit{only available if \textbf{Option} (Directional extremes) or \textbf{Option} (Pruning) is active}. Approximate via an N-FINDR-style greedy search initialized at $V^*$, replacing $V^*$ with the locally optimal set found and $\widehat H^*\gets V^*$. \;

\textbf{Affine right-inverse:} 
$\widehat H^*_{\mathrm{aug}}\gets[\widehat H^*\ \ \mathbf 1_K]$, 
$Y^*_{\mathrm{aug}}\gets[Y^*\ \ \mathbf 1_n]$; 
$R(\widehat H^*)\gets \mathrm{pinv}(\widehat H^*_{\mathrm{aug}})$; 
$W^{*}_{\mathrm{raw}}\gets Y^*_{\mathrm{aug}}\,R(\widehat H^*)$.\;

\textbf{Simplex projection:} threshold $W^{*}_{\mathrm{raw}} \leftarrow \max(W^{*}_{\mathrm{raw}},\varepsilon)$ elementwise; 
row-normalize to get $\widehat W^*$.\;

set $\widehat{\widetilde W} \gets \mathrm{diag}(r)\widehat W^*$ and compute $\widetilde m_n$ as the column means of $\widehat{\widetilde W}$

\textbf{Sample mean:} set $\widehat{\widetilde W} \gets \mathrm{diag}(r)\widehat W^*$ and compute $\widetilde m_n$ as the column means of $\widehat{\widetilde W}$.\;

\textbf{Source attributions:} set 
$\displaystyle 
\widehat \phi_{kj} \gets \frac{\widetilde m_{n,k}\,\widehat H^*_{kj}}{\sum_{\ell=1}^{K}\widetilde m_{n,\ell}\,\widehat H^*_{\ell j}},
\quad k=1,\ldots,K,\ j=1,\ldots,J.$\;

\Return $\widehat \Phi$.\;
\end{algorithm}
\else
\begin{algorithm}[h!]
\caption{\textsc{Detailed} GeomNMF \textsc{for source attribution matrix estimation}}
\label{alg:sourcexray}
\begin{algorithmic}[1]
\Input $Y\in\mathbb{R}_+^{n\times J}$, number of sources $K < J$, optional random direction search flag, optional pruning flag, optional greedy refinement flag (requires at least one of: direction search, pruning), tolerances $\varepsilon$.
\Output $\widehat \Phi$
\State \textbf{Row normalization:} set $r \gets Y1_J$ and \ $Y^* \gets \left(\mathrm{diag}(r)\right)^{-1}Y$.
\State \textbf{Intrinsic projection:} set $Y_{\mathrm{red}}\!\gets Y^*[:,1{:}J-1]$; compute the column-means $m_{\mathrm{red}}\!\gets \tfrac{1}{n}1_n^\top Y_{\mathrm{red}}$; center $Y_c\!\gets Y_{\mathrm{red}}-1_n m_{\mathrm{red}}^\top$; compute thin SVD $Y_c=U\Sigma V^\top$ and take $B\!\gets V_{[:,1:r_B]}$ with rank $r_B$; finally set $Z\!\gets Y_c\,B$.
\State \textbf{Hull vertices:} compute convex hull of $Z$; let $\mathcal I_\mathrm{hull}$ be its vertex indices; set $H^*_\mathrm{cand} \gets Y^*[\mathcal I_\mathrm{hull},:]$. \textit{*\textbf{Option} (Directional extremes)}: scale $Z$ to $Z_{\mathrm{w}}$ to have identity covariance. Sample $T$ random unit directions $\{u_t\}_{t=1}^T$; for each $u_t$, select the top $k$ observations with largest values of $Z_{\mathrm{w}}u_t$ and the top $k$ with smallest values. Let $\mathcal I_{\mathrm{dir}}$ be the resulting unique candidate indices, ordered by how often they are selected. Set $H^*_{\mathrm{cand}} \gets Y^*[\mathcal I_{\mathrm{dir}}, :]$.
\State \textit{*\textbf{Option} (Pruning)}: cluster $H^*_\mathrm{cand}$ (e.g., mini-batch $k$-means); keep one representative per cluster to obtain pruned $H^*_\mathrm{cand}$.
\State \textbf{Max-volume selection (global search):} for each $K$-subset $V\subset H^*_{\mathrm{cand}}$, compute its intrinsic $(K-1)$-volume $\log G(V)$; choose $V^*=\arg\max_{V}\log G(V)$ and set $\widehat H^*\gets V^*$. 
\State \textit{*\textbf{Option} (Greedy refinement)}: \textit{only available if \textbf{Option} (Directional extremes) or \textbf{Option} (Pruning) is active}. Approximate via an N-FINDR-style greedy search initialized at $V^*$, replacing $V^*$ with the locally optimal set found and $\widehat H^*\gets V^*$.
\State \textbf{Affine right-inverse:} augment $\widehat H^*_\mathrm{aug}\gets[\widehat H^*\ \ 1_K]$, $Y^*_\mathrm{aug}\gets[Y^*\ \ 1_n]$; compute $R(\widehat H^*) \gets \mathrm{pinv}(\widehat H^*_\mathrm{aug})$ and $W^*_\mathrm{raw}\gets Y^*_\mathrm{aug}R(\widehat H^*)$. 
\State \textbf{Simplex projection:} replace entries of $W^*_\mathrm{raw}$ by $\max(W^*_\mathrm{raw}, \varepsilon)$ for small $\varepsilon>0$, then normalize each row to sum $1$; denote result by $\widehat W^*$.
\State \textbf{Sample mean:} set $\widehat{\widetilde W} \gets \mathrm{diag}(r)\widehat W^*$ and compute $\widetilde m_n$ as the column means of $\widehat{\widetilde W}$.
\State \textbf{Source attributions:} set
\[
\widehat \phi_{kj} \gets \frac{\widetilde m_{n,k}\widehat H^*_{k j}}{\sum_{\ell=1}^{K}\widetilde m_{n,\ell}\widehat H^*_{\ell j}},\quad k=1,\dots,K, \ j=1,\dots,J.
\]
\State \textbf{Return} $\widehat \Phi$.
\end{algorithmic}
\end{algorithm}
\fi

\newpage 
\section{Numerical experiments} \label{subsec:sim}

\begin{table}[htbp]
\centering
\caption{Performance measures for source attribution matrix estimates.}
\vspace{5pt}
\begin{tabular}{cc}
\toprule
Metric & Formula \\ \midrule
% RMS-SAD & $\sqrt{\frac{1}{K}\sum_{k=1}^{K}\left\{\cos^{-1}\left(
%   \frac{(H^{*}_k)^{\top}\widehat{H}^{*}_k}{\lVert H^{*}_k\rVert_2 \,\lVert \widehat{H}^{*}_k\rVert_2}\right)\right\}^2}$ \\
NRMSE & $\frac{1}{K}\sum_{k=1}^K \left(\frac{\sqrt{\frac{1}{J}\sum_{j=1}^{J}\bigl(\phi_{k j}-\widehat{\phi}_{k j}\bigr)^{2}}}{\lVert \phi_{k\cdot}\rVert_2}\right)$ \\ 
NFD & $\lVert \Phi-\widehat{\Phi}\rVert_F/\lVert \Phi \rVert_F$ \\ \bottomrule
\end{tabular}
\label{tab:error-metrics}
\end{table}

\begin{figure}[!b]
    \centering
    \includegraphics[width=0.98\linewidth]{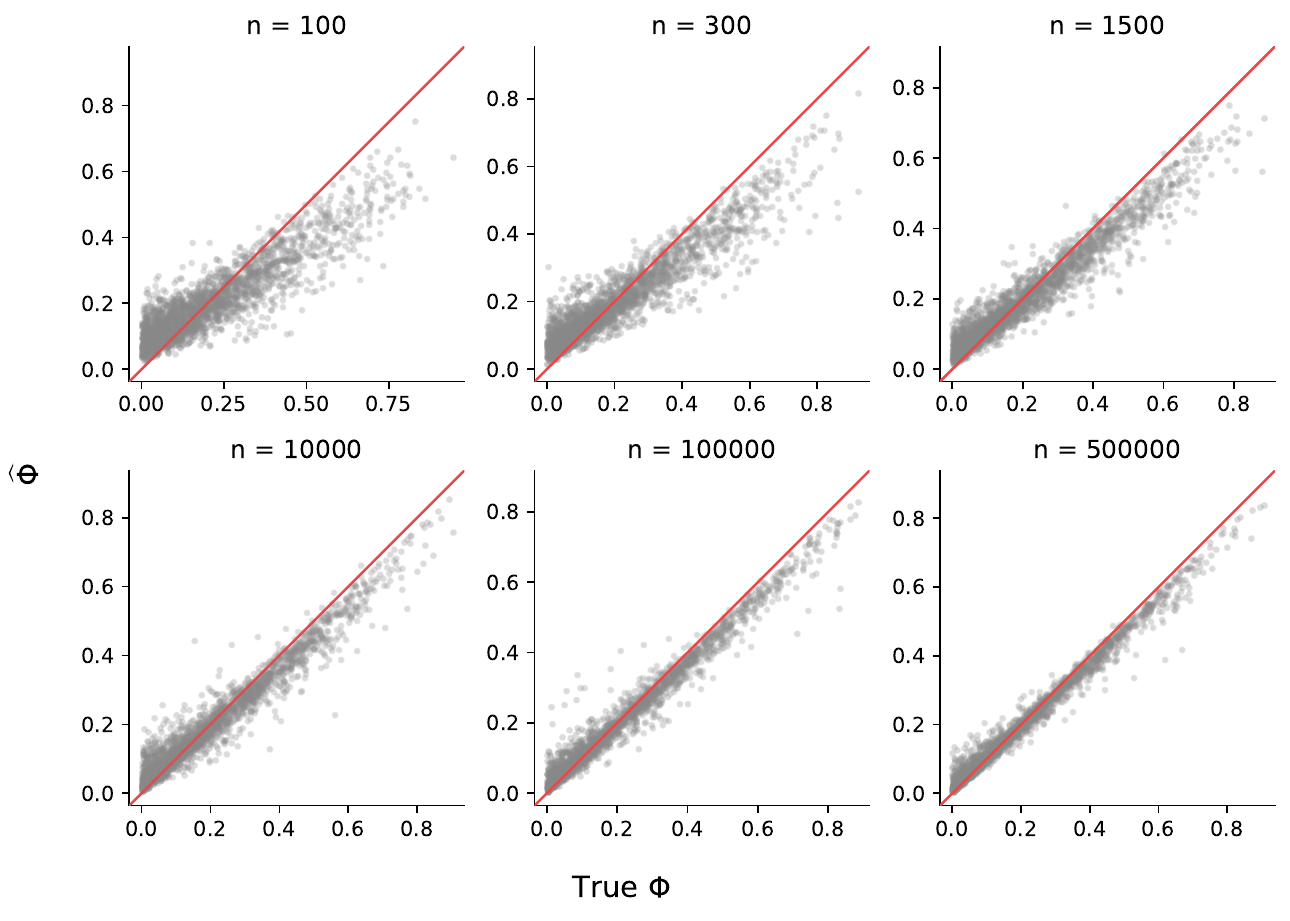}
    \caption{Scatter plots of $K\times J=50$ elements of estimated versus true $\Phi$, pooled over 50 replicates in the iid $W$ setting (exhaustive search, random-direction initialization, pruned to $10K$). The red line denotes the 45-degree reference.}
    \label{fig:sim_iid_scatter}
\end{figure}

\begin{figure}
    \centering
    \includegraphics[width=0.98\linewidth]{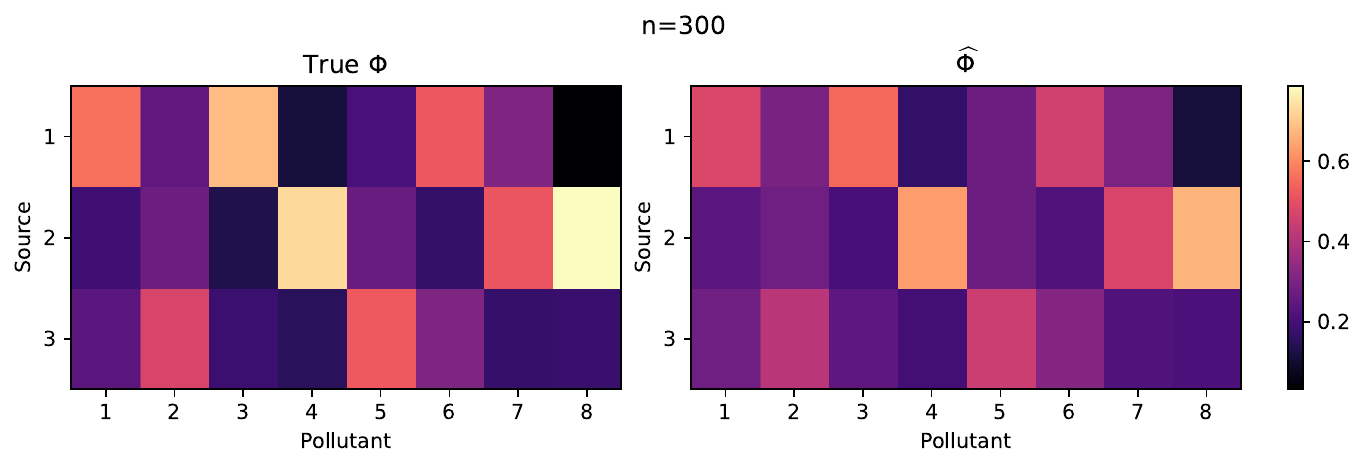}
    \includegraphics[width=0.98\linewidth]{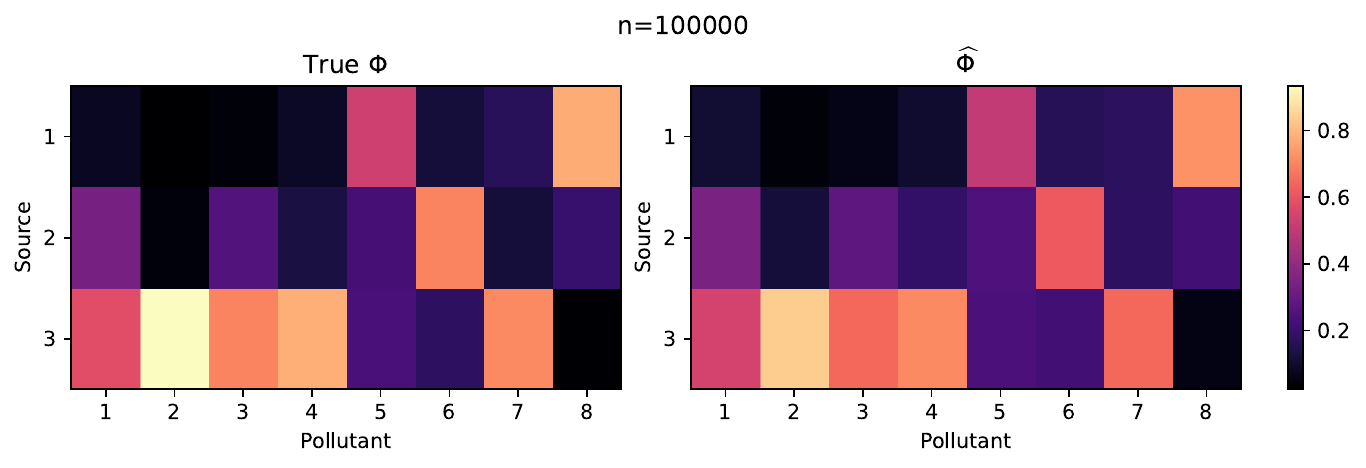}
    \includegraphics[width=0.98\linewidth]{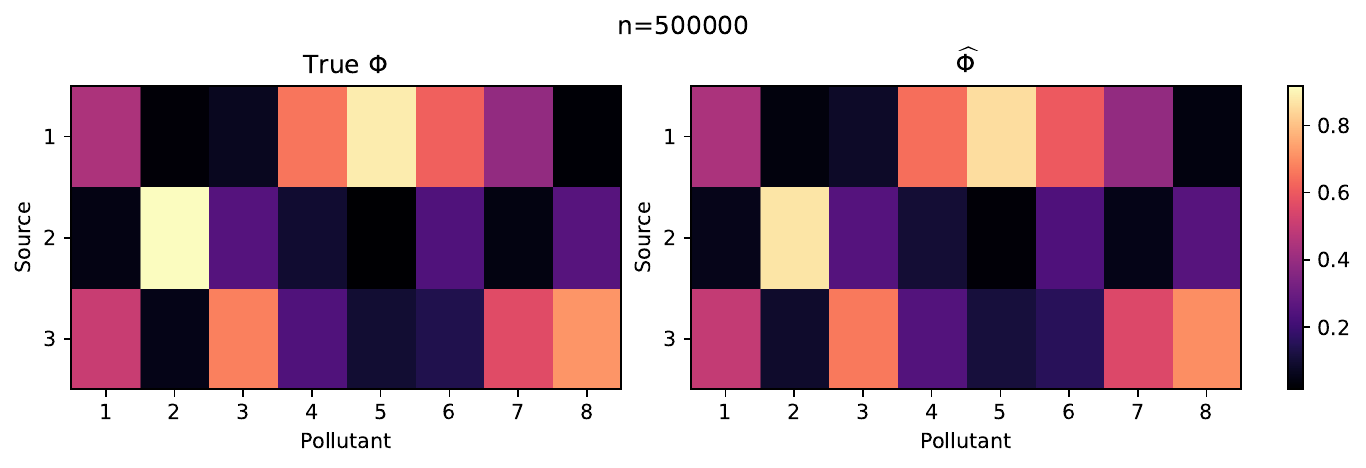}
    \caption{Heat maps of the true $\Phi$ and the estimate $\widehat \Phi$ for a randomly selected replicate under $\tau=0$, for $n=300$ (top), $n=100000$ (middle), and $n=500000$ (bottom).}
    \label{fig:sim_ar1W_heat}
\end{figure}

\newpage 

% bibliography 
\bibliographystyle{style/BJ_apalike}
\bibliography{text/bibliography}

\end{document}